\begin{document}
\numberwithin{equation}{section}
\newcounter{thmcounter}
\newcounter{Remarkcounter}
\newcounter{Defcounter}
\numberwithin{thmcounter}{section}
\newtheorem{Prop}[thmcounter]{Proposition}
\newtheorem{Corol}[thmcounter]{Corollary}
\newtheorem{theorem}[thmcounter]{Theorem}
\newtheorem{Lemma}[thmcounter]{Lemma}
\theoremstyle{definition}
\newtheorem{Def}[Defcounter]{Definition}
\theoremstyle{remark}
\newtheorem{Remark}[Remarkcounter]{Remark}
\newtheorem{Example}[thmcounter]{Example}

\newcommand{\COD}[3]{Codazzi equation \eqref{MinkCodazzi} for $X=e_{#1}$, $Y=e_{#2}$ and $Z=e_{#3}$}
\newcommand{\diag}{\mathrm{diag}\ }
\newcommand{\DIV}{\mathrm{div}}
\title{Generalized constant ratio hypersurfaces in Euclidean spaces}
\author{Nurettin Cenk Turgay}
\date{}

\maketitle
\begin{abstract}
In this paper, we study generalized constant ratio (GCR) hypersurfaces in Euclidean spaces. We mainly focus on the hypersurfaces in $\mathbb E^4$. First, we deal with  $\delta(2)$-ideal GCR hypersurfaces. Then, we  study on hypersurfaces with constant (first) mean curvature. Finally, we obtain the complete classification of  GCR hypersurfaces with vanishing Gauss-Kronecker curvature. We also give some explicit examples.

\textbf{Keywords.} Generalized constant ratio submanifolds,  $\delta(r)$-invariant hypersurfaces, constant mean curvature, Gauss-Kronecker curvature

\textbf{Mathematical subject classification.} 53C42(Primary),  53B25(Secondary).
\end{abstract}

\section{Introduction}\label{SectionIntrod}
One of the most basic objects studied to understand geometrical properties of a (semi-)Riemannian submanifold of a (semi-)Euclidean space is its position vector. In this direction, the notion of constant ratio (CR) submanifolds in Euclidean spaces introduced by B.-Y. Chen, \cite{ChenCRSurf2001}.  Let $M$ be a submanifold of the Euclidean space $\mathbb E^{m}$ and $x:M\rightarrow \mathbb E^{m}$ its position vector.  Put $x=x^T+x^\perp$, where $x^T$ and $x^\perp$ denote the tangential and normal components of $x$, respectively. If the ratio of length of these two vectors is constant, then $M$ is said to be a CR submanifold  or an equiangular submanifold (\cite{Boyadzhieb2007}). Some results in CR submanifolds appeared also in \cite{ChenCRSurf2002,ChenCRSurf}. 

On the other hand,  in the particular case of the codimension 1,  if $M$ is an CR hypersurface, then the tangential part $x^T$ of $x$ is a principal direction of $M$. However, the converse of this statement does not hold in general. For example, it is proved that all of rotational surfaces in $\mathbb E^3$ have this property  (see \cite[Proposition 4.2]{YuFu2014GCRS}), but a rotational surface is not a CR surface unless its profile curve is chosen specifically. Therefore, the definition of  generalized constant ratio (GCR) surface  has been recently given: If $x^T$ is a principal direction of a surface, then the surface is said to be a GCR  surface, \cite{YuFu2014GCRS}.

We would like to note that if the ambient space is Euclidean, being generalized constant ratio of a surface is equivalent to having canonical principal direction (CPD) (see \cite[Proposition 2.1]{YuFu2014GCRS}). Surfaces with CPD were studied in some articles appeared in more recent times. For example, in \cite{DFJvK2009,DMuntNistr2009}, several authors obtained the classification of surfaces with canonical principal direction in the spaces $\mathbb S^2 \times\mathbb  R$ and $\mathbb H^2 \times\mathbb  R$, respectively. Furthermore,  generalized constant ratio  surfaces in the Euclidean 3-space were studied in \cite{YuFu2014GCRS,MunteanuNistor2011}.

In this paper, we consider  generalized constant ratio hypersurfaces. First, we obtain some general results in the Euclidean space of arbitrary dimension. We also show that GCR hypersurfaces and biconservative hypersurfaces satisfy an interesting common property (see Proposition \ref{PropConnBicandGCR}). Then, we give our main results and explicit examples in the Euclidean 4-space. 

The organization of this paper is as follows. In Sect. 2, we give basic definitions and notation. We also obtain a lemma which we use while obtaining further results. In Sect. 3, after we describe the notion of generalized constant ratio  hypersurfaces, we prove an equivalent condition to being GCR  which is a generalization of \cite[Proposition 2.1]{YuFu2014GCRS} to higher dimensions.  In Sect. 4, first, we obtain some explicit examples. Then, we present our main results in the Euclidean space $\mathbb E^4$. 

The submanifolds we are dealing with are smooth and connected unless otherwise stated.

\section{Prelimineries}
Let $\mathbb E^m$ denote the Euclidean $m$-space with the canonical Euclidean metric tensor given by  
$$
\widetilde g=\langle\ ,\ \rangle=\sum\limits_{i=1}^m dx_i^2,
$$
where $(x_1, x_2, \hdots, x_m)$  is a rectangular coordinate system in $\mathbb E^m$. 

Consider an $n$-dimensional Riemannian submanifold $M$  of the space $\mathbb E^m$. We denote Levi-Civita connections of $\mathbb E^m$ and $M$ by $\widetilde{\nabla}$ and $\nabla$, respectively. Then, Gauss and Weingarten formulas are given, respectively, by
\begin{eqnarray}
\label{MEtomGauss} \widetilde\nabla_X Y&=& \nabla_X Y + h(X,Y),\\
\label{MEtomWeingarten} \widetilde\nabla_X \xi&=& -S_\xi(X)+\nabla^\perp_X \xi
\end{eqnarray}
for all  $X,\ Y\in \Gamma(TM)$ and  $\xi\in \Gamma(T^\perp M)$,  where  $\Gamma(TM)$ and $\Gamma(T^\perp M)$ denote the space of vector fields tangent to $M$ and normal to $M$, respectively and $h$,  $\nabla^\perp$ and $S$ are the second fundamental form, the normal connection and  the shape operator of $M$,  respectively. Note that for each $\rho \in T^{\bot}_m M$, the shape operator $S_{\rho}$ along the normal direction $\rho$  is a symmetric  endomorphism of the tangent space  $T_m M$ at $m \in M$ which is related with the second fundamental form by 
\begin{eqnarray}
\label{AhRelated}\left\langle h(X_m, Y_m), \rho\right\rangle = \left\langle S_{\rho}X_m, Y_m \right\rangle.
\end{eqnarray}
 
On the other hand, since the curvature tensor of the ambient space $\mathbb E^m$ vanishes identically, Gauss and Codazzi equations become
\begin{eqnarray}
\label{MinkGaussEquation}\label{GaussEq} \langle R(X,Y)Z,W\rangle&=&\langle h(Y,Z),h(X,W)\rangle-
\langle h(X,Z),h(Y,W)\rangle,\\
\label{MinkCodazzi} (\bar \nabla_X h )(Y,Z)&=&(\bar \nabla_Y h )(X,Z),
\end{eqnarray}
respectively, where  $R$ is the curvature tensor of $M$ and  $\bar \nabla h$ is defined by
$$(\bar \nabla_X h)(Y,Z)=\nabla^\perp_X h(Y,Z)-h(\nabla_X Y,Z)-h(Y,\nabla_X Z).$$

We would like to prove the following lemma which we will use. In this lemma, we put
$$\begin{array}{rcl}\nabla X:\Gamma(TM)&\rightarrow& \Gamma(TM)\\Y&\mapsto&\nabla_YX\end{array}$$
 and the index $i$ (resp. $\alpha$)  runs over the range $1,2,\hdots,p$ (resp. $1,2,\hdots,q$).
\begin{Lemma} \label{WarpPrdctKeyLemma}
Let $M$ be a $p+q+1$ dimensional Riemannian manifold. Assume that there exist a unit vector field $X$ and two involutive distributions $T_1$ and $T_2$ of dimension $p$ and $q$ such that 
\begin{enumerate}
\item [(i).]   $T_mM=X|_m\oplus T_1(m)\oplus T_2(m)$ for all $m\in M$,
\item [(ii).]  $\nabla_{T_{3-i}}T_i\subset T_i$ for $i=1,2$,
\item [(iii).] The linear mapping  $\nabla X$ satisfies $\nabla X|_{T_i}=\lambda I$ and $\nabla X|_{T_i}=\nu I$ for some smooth functions $\lambda$ and $\nu$ satisfying
 $$Y(\lambda)=Z(\nu)=0 \mbox{ \quad whenever $Y\in T_1$ and $Z\in T_2$},$$
 where $I$ is the identity operator acting on $\Gamma(TM)$,
\item [(iv).] $X=\partial_s$ for  a smooth function $s:M\rightarrow \mathbb R$.
\end{enumerate}
Then, there exists a local coordinate system $(s,t_1,t_2,\hdots,t_p,u_1,u_2,\hdots,u_q)$ such that
\begin{enumerate}
\item [(A).] $X=\partial_s$, $T_1=\mathrm{span}\{\partial_{t_1},\partial_{t_2},\hdots,\partial_{t_p}\}$ and $T_2=\mathrm{span}\{\partial_{u_1},\partial_{u_2},\hdots,\partial_{u_q}\}$,
\item [(B).] $\nabla_{\partial_{u_\alpha}}\partial_{t_i}=\nabla_{\partial_{t_i}}\partial_{u_\alpha}=0$,
\item [(C).] The metric tensor of $M$ is
\begin{equation}\label{WarpPrdctKeyLemmaResult}
g=ds^2+a(s)\left(\sum\limits_i A_i\left(t_1,t_2,\hdots,t_p\right)dt_i^2\right)+ b(s)\left(\sum\limits_\alpha B_\alpha\left(u_1,u_2,\hdots,u_q\right)du_i^2\right)
\end{equation}
for some non-vanishing smooth functions $a,A_i,b,B_\alpha$.
\end{enumerate}
\end{Lemma}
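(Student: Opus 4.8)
The plan is to realize $M$ locally as a doubly warped product over the $s$-line and to read off (A)--(C) from the resulting coordinate expression of $g$. I use throughout that the splitting in (i) is orthogonal (in the setting where the lemma is applied, $X$, $T_1$, $T_2$ are mutually orthogonal eigendistributions of the shape operator). Granting orthogonality, $\nabla X$ is self-adjoint on $X^\perp=T_1\oplus T_2$: it equals $\lambda I$ on $T_1$ and $\nu I$ on $T_2$ by (iii), and the cross terms agree since $\langle\nabla_Y X,Z\rangle=\lambda\langle Y,Z\rangle=0=\nu\langle Z,Y\rangle=\langle\nabla_Z X,Y\rangle$ for $Y\in T_1$, $Z\in T_2$. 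Because $\langle[V,W],X\rangle=\langle V,\nabla_W X\rangle-\langle W,\nabla_V X\rangle$ for $V,W\in X^\perp$, self-adjointness makes $X^\perp$ involutive. Fixing a leaf $L$ of $X^\perp$, the distributions $T_1,T_2$ restrict to $L$ as orthogonal, complementary, involutive fields that are parallel for the induced connection: for $Y,Y'\in T_1$ and $Z\in T_2$, condition (ii) gives $\langle\nabla_{Y'}Y,Z\rangle=-\langle Y,\nabla_{Y'}Z\rangle=0$ and $\nabla_Z Y\in T_1$. By the local de Rham theorem $L$ is a Riemannian product $N_1\times N_2$, and product coordinates $(t_i)$ on $N_1$, $(u_\alpha)$ on $N_2$ make $g|_L$ block diagonal with $\nabla_{\partial_{t_i}}\partial_{u_\alpha}=0$ on $L$ --- already part of (B).

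Next I would spread these transverse coordinates along the flow of $X$. Using (iv), let $\Phi_s$ be the flow of $X=\partial_s$ and assign to $\Phi_s$ of the point of $L$ with coordinates $(t_i,u_\alpha)$ the coordinates $(s,t_i,u_\alpha)$; then $\partial_{t_i},\partial_{u_\alpha}$ are Lie-dragged, so $[X,\partial_{t_i}]=[X,\partial_{u_\alpha}]=0$. Hence $\nabla_X\partial_{t_i}=\nabla_{\partial_{t_i}}X=\lambda\,\partial_{t_i}$ and $\nabla_X\partial_{u_\alpha}=\nu\,\partial_{u_\alpha}$ exactly, by (iii). Testing $\langle\nabla_X X,\cdot\rangle$ against $\partial_{t_i}$, $\partial_{u_\alpha}$, $X$ and using orthogonality then gives $\nabla_X X=0$, so the integral curves of $X$ are unit-speed geodesics. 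Consequently $X\langle X,\partial_{t_i}\rangle=X\langle X,\partial_{u_\alpha}\rangle=0$; these cross terms vanish on $L$, hence identically, while $\langle X,X\rangle\equiv1$. This produces the $ds^2$ summand of (C) with no mixed $s$-terms.

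The crux is the $s$-evolution of the leaf metric. From $\nabla_X\partial_{t_i}=\lambda\partial_{t_i}$, $\nabla_X\partial_{u_\alpha}=\nu\partial_{u_\alpha}$ and the vanishing brackets I obtain
$$\partial_s\langle\partial_{t_i},\partial_{t_j}\rangle=2\lambda\langle\partial_{t_i},\partial_{t_j}\rangle,\qquad \partial_s\langle\partial_{t_i},\partial_{u_\alpha}\rangle=(\lambda+\nu)\langle\partial_{t_i},\partial_{u_\alpha}\rangle,\qquad \partial_s\langle\partial_{u_\alpha},\partial_{u_\beta}\rangle=2\nu\langle\partial_{u_\alpha},\partial_{u_\beta}\rangle.$$
The mixed block solves a homogeneous linear equation and vanishes at $s=s_0$, so it stays zero and the block structure persists for all $s$. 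Integrating the diagonal blocks gives $\langle\partial_{t_i},\partial_{t_j}\rangle(s,\cdot)=e^{2\int_{s_0}^s\lambda}\langle\partial_{t_i},\partial_{t_j}\rangle(s_0,\cdot)$, and here (iii) is used decisively: the constancy of $\lambda$ (resp. $\nu$) along the leaves forces the integrating factor to depend on $s$ alone, giving a single warping function $a(s)=e^{2\int_{s_0}^s\lambda}$ (resp. $b(s)$). Since $\langle\partial_{t_i},\partial_{t_j}\rangle(s_0,\cdot)$ is the product metric on $N_1$, it depends only on $t$; choosing the $(t_i)$ to be orthogonal coordinates on $N_1$ and the $(u_\alpha)$ on $N_2$ diagonalizes the blocks and yields exactly (C). The product form of $g$ then propagates $\nabla_{\partial_{t_i}}\partial_{u_\alpha}=0$ to all $s$, completing (B), while (A) holds by construction of the chart.

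I expect the main obstacle to be exactly this last reduction: proving that each conformal factor collapses to a function of $s$ only, which is where the eigenvalue structure of $\nabla X$ together with the constancy conditions in (iii) must be exploited in full, rather than merely the self-adjointness used earlier. A subordinate technical point is the simultaneous diagonalization of the factor metrics --- i.e. the existence of orthogonal coordinates on $N_1$ and $N_2$ --- which is automatic in the low dimensions relevant to the paper but would require extra justification in general.
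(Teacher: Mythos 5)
Your construction takes a genuinely different route from the paper's: the paper obtains the adapted chart of (A) in one stroke by citing \cite[Lemma 4.2]{Magid1985} and then does all remaining work in those coordinates, whereas you build the chart by hand (involutivity of $X^\perp$, de Rham splitting of a leaf $L$, Lie-dragging along the flow of $X$) and integrate an evolution equation for the metric. Granting your orthogonality assumption (which the paper also makes tacitly), the leaf-splitting part is sound. The genuine gap is the Lie-dragging step: from $[X,\partial_{t_i}]=0$ you infer $\nabla_X\partial_{t_i}=\nabla_{\partial_{t_i}}X=\lambda\,\partial_{t_i}$ ``by (iii)'', but (iii) applies to $\partial_{t_i}$ only where $\partial_{t_i}\in T_1$, and the dragged field is known to lie in $T_1$ only on the initial leaf. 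What you need is that the flow of $X$ preserves $T_1$ and $T_2$, i.e.\ $[X,\Gamma(T_i)]\subset\Gamma(T_i)$; since $[X,Y]=\nabla_XY-\lambda Y$ for $Y\in\Gamma(T_1)$, this amounts to $\langle\nabla_XY,Z\rangle=0$ for $Z\in T_2$. This does \emph{not} follow from (i)--(iv): using $\nabla_XX=0$ and the self-adjointness of $\nabla X$, the Ricci identity gives $\langle R(X,Y)X,Z\rangle=(\lambda-\nu)\langle\nabla_XY,Z\rangle$, so what you need is a curvature restriction on the abstract manifold $M$, and it can fail. Concretely, on $\mathbb R^3$ with $g=ds^2+g_{ij}(s)\,dx^i\,dx^j$ ($g_{ij}$ positive definite, depending on $s$ only), take $T_1,T_2$ to be the eigendistributions of $\tfrac12 g^{-1}g'$ and $\lambda,\nu$ its eigenvalues: (i)--(iv) all hold (for (ii), note $\nabla_ZY=-\tfrac12 g'(Y,Z)\,\partial_s=-\lambda\langle Y,Z\rangle\,\partial_s=0$), yet if the eigendirections rotate with $s$ then no nonvanishing section of $T_1$ commutes with $\partial_s$, so conclusions (A)--(C) fail. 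The step is therefore not merely under-justified; as an implication from (i)--(iv) it is false.

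To be fair, this defect sits in the lemma itself, and the paper's proof conceals the same requirement inside the appeal to \cite[Lemma 4.2]{Magid1985}: coordinates as in (A) force each of $\mathbb{R}X\oplus T_1$ and $\mathbb{R}X\oplus T_2$ to be involutive, which is exactly the flow-invariance that (i), (ii), (iv) do not supply. Where the lemma is actually applied it does hold, because \eqref{E4GCRDelta3Claim2a} and \eqref{E4GCRDelta3Claim2b} give $\langle[e_1,e_2],e_3\rangle=\omega_{23}(e_1)-\omega_{13}(e_2)=0$ and likewise with $e_2,e_3$ interchanged. A second, smaller overstatement in your write-up: you say (iii) makes $\lambda$ constant ``along the leaves'', so that $e^{2\int\lambda}$ depends on $s$ alone; but (iii) only gives $\partial_{t_i}\lambda=0$, and a priori $\lambda$ may still depend on the $u_\alpha$. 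The paper removes the $u$-dependence by proving (B) \emph{first} (from (ii) and $[\partial_{t_i},\partial_{u_\alpha}]=0$), which makes the $t$-block of $g$ independent of $u$, and only then reads off that $\lambda=\partial_s(\ln\hat a_i)$ depends on $s$ alone. Your order is circular at this point: you deduce (B) at the end from the product form of $g$, but the product form was obtained using the constancy not yet justified. Both repairs are available---add the flow-invariance of $T_1,T_2$ under $X$ as a hypothesis (it is verified in the paper's application), and derive (B) before integrating---but they are needed.
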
 
\begin{proof}
Because of conditions (i), (ii) and (iv),  by employing \cite[Lemma 4.2]{Magid1985}, one can see that there exists a local coordinate system $(s,t_1,t_2,\hdots,t_p,u_1,u_2,\hdots,u_q)$ such that (A) is satisfied. Thus, the metric tensor of $M$ becomes
\begin{equation}\label{WarpPrdctKeyLemmaBe4Result}
g=ds^2+\sum\limits_i\hat a_i(s,t,u)dt_i^2+\sum\limits_\alpha \hat b_\alpha(s,t,u)du_\alpha^2,
\end{equation}
where we put $t=(t_1,t_2,\hdots,t_p)$ and $u=(u_1,u_2,\hdots,u_q)$. 
On the other hand, since $[{\partial_{u_\alpha}},\partial_{t_i}]=0,$ we have $\nabla_{\partial_{u_\alpha}}\partial_{t_i}=\nabla_{\partial_{t_i}}\partial_{u_\alpha}$. However, because of the condition (ii), we have $\nabla_{\partial_{u_\alpha}}\partial_{t_i}\in T_1$ and $\nabla_{\partial_{t_i}}\partial_{u_\alpha}\in T_2$. Hence, we have (B).

Moreover, (B) implies  $
2\hat a_i\partial_{u_\alpha}\hat a_i=\partial_{u_\alpha}(\langle \partial_{t_i},\partial_{t_i}\rangle)
=2\langle \nabla_{\partial_{u_\alpha}}\partial_{t_i},\partial_{t_i}\rangle=0.$ 
Therefore, we have 
\begin{equation}\label{WarpPrdctKeyLemmaBe4Result00}
\hat a_i=\hat a_i(s,t).
\end{equation}
 On the other hand, because of the condition (iii), we have $\nabla_{\partial_{t_i}}\partial_s=\lambda{\partial_{t_i}}$. This equation and \eqref{WarpPrdctKeyLemmaBe4Result} imply
$$\frac 12\partial_s(\hat a_i^2)=\frac 12\partial_s\left(\langle\partial_{t_i},{\partial_{t_i}}\rangle\right)=\langle\nabla_{\partial_s}\partial_{t_i},{\partial_{t_i}}\rangle=\langle\nabla_{\partial_{t_i}}\partial_s,{\partial_{t_i}}\rangle=\langle\lambda\partial_{t_i},{\partial_{t_i}}\rangle=\lambda \hat a_i^2$$
from which we get
\begin{equation}\label{WarpPrdctKeyLemmaBe4Result01}
\lambda=\partial_s(\ln \hat a_i).
\end{equation}
 Since $\partial_{t_i}(\lambda)=0$, \eqref{WarpPrdctKeyLemmaBe4Result00} and \eqref{WarpPrdctKeyLemmaBe4Result01} implies $\hat a_i=a(s) A_i(t)$ for some smooth functions $A_i,a$. Analogously, we have, $\hat b_\alpha=b(s) B_\alpha(t)$ for some smooth functions $B_\alpha,b$. Hence, \eqref{WarpPrdctKeyLemmaBe4Result} implies (C).
\end{proof}


\subsection{Hypersurfaces in Euclidean spaces}
Now, let $M$ be an oriented hypersurface in the Euclidean space $\mathbb E^{n+1}$ and  $S$ its shape operator along the unit normal vector field $N$ associated with the oriantiation of $M$. We consider a local orthonormal frame field $\{e_1,e_2,\hdots,e_n;N\}$ consisting of  principal directions of $M$ with corresponding principal curvatures $k_1,\ k_2,\hdots,\ k_n$.  Then, the shape operator $S$ of $M$ becomes
$$S=\diag(k_1,k_2,\hdots,k_n)$$ 
and the $k$-th invariant of $S$ is the function defined by
$$s_k=\sum\limits_{i_1<i_2<\hdots<i_k}k_{i_1}k_{i_2}\hdots k_{i_k},\quad k=1,2,\hdots,n.$$
 By using these invariants, mean curvatures $H_1,H_2,\hdots,H_n$ of $M$ are defined by
 ${n\choose k}H_k=s_k.$
(see, for example, \cite{AliasGurbuzGeomDed2006}). Note that $H_1$, the first mean curvature of $M$, is usually called as the mean curvature of $M$, while $H_n$ is called the Gauss-Kronecker curvature of $M$ for $n>2$ and $M$ is said to be $k$-minimal if $H_k=0$. 

$M$ is said to be isoparametric if all of its principal curvatures are constant. It is well-known that if $M$ is an isoparametric hypersurface of $\mathbb E^{n+1}$, then it is congruent to an open part of $ \mathbb S^{n-p}(r^2)\times \mathbb E^p$,  where $r\neq 0$ is a constant and $p\in\{0,1,\hdots,n\}$.

On the other hand, if  $\{\theta_1,\theta_2,\hdots,\theta_n\}$ is a dual basis of the local, orthonormal, base field  above,  then the first structural equation of Cartan is given by
\begin{equation}
\label{CartansFirstStructural}d\theta_i=\sum\limits_{i=1}^n\theta_j\wedge\omega_{ij},\quad i=1,2,\hdots,n,
\end{equation}
where $\omega_{ij}$ denotes connection forms corresponding to the chosen frame field, i.e., $\omega_{ij}(e_l)=\langle\nabla_{e_l}e_i,e_j\rangle$ and satisfies $\omega_{ij}=-\omega_{ji}$.

\subsection{$\delta(r)$-ideal hypersurfaces}
Let $M$ be an $n$-dimensional submanifold of the Euclidean space $\mathbb E^{m}$ and $\delta (r)$ denote the $r$-th $\delta$ invariant introduced in \cite{ChenPRGeom2001}, where B.-Y. Chen proved the general sharp inequality
\begin{equation}\label{BYCSharpIneq}
\delta(r)\leq \frac{n^2(n-r)}{2(n-r + 1)}\langle H,H\rangle,\quad r=2,3,\hdots,n-1
\end{equation}
if $H=1/n(\mathrm{tr}h)$ denotes the mean curvature vector of $M$ in $\mathbb E^{m}$ (see also \cite{ChenMunteanuBI}). Moreover, he called a submanifold $M$ as $\delta(r)$-ideal if the equality case of \eqref{BYCSharpIneq} is satisfied identically. We would like to state the following lemma obtained in \cite{ChenPRGeom2001}.
\begin{Lemma}\cite{ChenPRGeom2001}\label{Delta2InvLemma}
Let $M$ be a hypersurface in the Euclidean space $\mathbb E^{n+1}$. Then, it is a $\delta(2)$-ideal hypersurface if and only if its principal curvatures are $k_1,k_2,\underbrace{k_1+k_2,k_1+k_2,\hdots,k_1+k_2}_{(n-2)\mbox{ times}}.$
\end{Lemma}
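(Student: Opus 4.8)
The plan is to recast the $\delta(2)$-invariant entirely in terms of the principal curvatures and then reduce the sharp inequality \eqref{BYCSharpIneq} to an elementary Cauchy--Schwarz estimate whose equality case is transparent. Recall that for an $n$-dimensional submanifold $\delta(2)(m)=\tau(m)-\inf K(m)$, where $\tau=\sum_{i<j}K_{ij}$ is the scalar curvature and $\inf K(m)$ is the infimum of the sectional curvatures of all $2$-planes in $T_mM$. First I would diagonalize $S=\diag(k_1,\dots,k_n)$ and invoke the Gauss equation \eqref{GaussEq}: since $h(e_a,e_b)=k_a\delta_{ab}N$, the coordinate plane $\mathrm{span}\{e_i,e_j\}$ has sectional curvature $K_{ij}=k_ik_j$, hence $\tau=\sum_{i<j}k_ik_j=s_2$. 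Writing $T=\sum_l k_l=\mathrm{tr}\,S$, one also has $\langle H,H\rangle=T^2/n^2$, so the right-hand side of \eqref{BYCSharpIneq} for $r=2$ becomes $\tfrac{n-2}{2(n-1)}T^2$.

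The main obstacle is the term $\inf K$, because a priori the minimizing $2$-plane need not be a coordinate plane. I would handle this through the curvature operator $\mathcal R$ acting on $\Lambda^2 T_mM$. A direct computation from \eqref{GaussEq} shows that the orthonormal basis $\{e_i\wedge e_j\}_{i<j}$ diagonalizes $\mathcal R$, with $\mathcal R(e_i\wedge e_j)=k_ik_j\,e_i\wedge e_j$; thus the eigenvalues of $\mathcal R$ are exactly the numbers $k_ik_j$. Sectional curvatures are the values $\langle\mathcal R\,\omega,\omega\rangle$ on unit decomposable $2$-vectors $\omega$, so $\inf K$ is at least the smallest eigenvalue of $\mathcal R$, namely $\min_{i<j}k_ik_j$; but that smallest eigenvalue is already attained at the decomposable $2$-vector $e_{i_0}\wedge e_{j_0}$, i.e. at a coordinate plane. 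Hence $\inf K=\min_{i<j}k_ik_j$ and consequently $\delta(2)=\tau-\min_{i<j}k_ik_j=\max_{i<j}(\tau-k_ik_j)$.

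It then remains to prove $\tau-k_ik_j\le \tfrac{n-2}{2(n-1)}T^2$ for every pair and to detect equality. Using $2\tau=T^2-\sum_l k_l^2$, this inequality is algebraically equivalent to $\tfrac{1}{n-1}T^2\le (k_i+k_j)^2+\sum_{l\ne i,j}k_l^2$. I recognize the right-hand side as the sum of squares of the $n-1$ numbers $k_i+k_j,\,(k_l)_{l\ne i,j}$, whose total sum is $T$; the inequality is therefore exactly Cauchy--Schwarz, $T^2\le(n-1)\big((k_i+k_j)^2+\sum_{l\ne i,j}k_l^2\big)$, which also re-establishes \eqref{BYCSharpIneq} in the hypersurface case. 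Equality holds if and only if these $n-1$ numbers coincide, i.e. $k_l=k_i+k_j$ for all $l\ne i,j$. Since $\delta(2)=\max_{i<j}(\tau-k_ik_j)$, the hypersurface is $\delta(2)$-ideal precisely when equality occurs for the maximizing pair, which after relabelling $(i,j)=(1,2)$ says $k_3=\cdots=k_n=k_1+k_2$. This is the asserted spectrum, completing the plan.
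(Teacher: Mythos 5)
Your proof is correct, but it is worth noting that the paper itself offers no argument for this statement at all: Lemma \ref{Delta2InvLemma} is imported verbatim from Chen's book \cite{ChenPRGeom2001}, so what you have produced is a self-contained substitute for an external citation rather than a variant of an in-paper proof. Your route is sound at every step: the identification $\tau=s_2$ and $K_{ij}=k_ik_j$ from the Gauss equation \eqref{GaussEq} is standard; the curvature-operator argument correctly disposes of the genuine subtlety (that the plane realizing $\inf K$ might not be a coordinate plane) by noting that for a hypersurface $\mathcal R$ is diagonalized by the decomposable $2$-vectors $e_i\wedge e_j$, so its smallest eigenvalue is attained on a coordinate plane and $\inf K=\min_{i<j}k_ik_j$; and the reduction of $\tau-k_ik_j\leq \tfrac{n-2}{2(n-1)}T^2$ to Cauchy--Schwarz applied to the $n-1$ numbers $k_i+k_j,\ (k_l)_{l\neq i,j}$, with equality precisely when they all coincide, is exactly right, and it correctly yields both implications (for the converse, equality for the pair $(1,2)$ combined with the inequality for all other pairs forces $\delta(2)$ to equal the bound). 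Chen's own proof in \cite{ChenPRGeom2001} runs through his algebraic lemma (if $a_1+\cdots+a_n=2(a_1a_2+\cdots)$-type constraint holds then $2a_1a_2\geq c$ with a specified equality case), which is algebraically equivalent to your Cauchy--Schwarz step but less transparent; your formulation also re-derives the inequality \eqref{BYCSharpIneq} itself in the hypersurface case, which the paper only quotes. The one presentational caveat: you should state explicitly that the argument is pointwise and that ``$\delta(2)$-ideal'' requires equality identically, so the conclusion about the spectrum holds at every point (with the pair $(k_1,k_2)$ possibly varying from point to point only up to relabelling), and implicitly $n\geq 3$ is needed for $\delta(2)$ to be defined.
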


\section{Generalized constant ratio hypersurfaces}
In this section, we, firstly, want to extend the definition of generalized constant ratio surfaces given in \cite{YuFu2014GCRS} by Fu and Munteanu to hypersurfaces in Euclidean spaces. 

Let $M$ be an oriented, immersed hypersurface in the Euclidean space $\mathbb E^{n+1}$,  $x:M\rightarrow \mathbb E^{n+1}$ an isometric immersion and $N$ its unit normal vector. We define functions $\mu$ and $\theta$ by
\begin{eqnarray}
\label{DefmuExp1}\mu&=&\langle x,x \rangle^{1/2},\\
\label{DefthetaExp1}\mu\cos\theta&=&\langle x,N \rangle.
\end{eqnarray}
Then, the position vector $x$ can be expressed as
\begin{eqnarray}
\label{Dexompofx}x=\mu\sin\theta e_1 +\mu\cos\theta N,
\end{eqnarray}
where  $e_1$  is a unit tangent vector field along the tangential component $x^T$ of the vector field $x$.
Now, let $X$ be a vector field tangent to $M$. Since $\widetilde\nabla_Xx=X$, \eqref{Dexompofx} implies
\begin{align}\label{DexompofxRes1}
\begin{split}
 X=&X(\mu\sin\theta) e_1 +\mu\sin\theta\nabla_{X}e_1-\mu\cos\theta SX\\&
+\mu\sin\theta h(X,e_1)+X(\mu\cos\theta)N.
\end{split}
\end{align}
Note that by applying $X$ to \eqref{DefmuExp1} and using \eqref{Dexompofx}, one can get
\begin{eqnarray}
\label{DexompofxRes2}X(\mu)=\langle X,e_1\rangle \sin\theta,
\end{eqnarray}
i.e., $Y(\mu)$ vanishes identically whenever $Y$ is a vector field tangent to $M$ and orthogonal to $e_1$. Therefore, by combining \eqref{DexompofxRes1} and \eqref{DexompofxRes2}, we obtain
\begin{align}\label{DexompofxRes3}
\begin{split}
 X=&\langle X,e_1\rangle \sin^2\theta e_1+\mu\cos\theta X(\theta) e_1 +\mu\sin\theta\nabla_{X}e_1-\mu\cos\theta SX\\&
+\mu\sin\theta h(X,e_1)+\langle X,e_1\rangle \sin\theta\cos\theta N-\mu\sin\theta X(\theta)N.
\end{split}
\end{align}

Next, we obtain the following proposition.
\begin{Prop}\label{PROPOPP1}
Let $M$ be an oriented hypersurface in the Euclidean space $\mathbb E^{n+1}$ and  $x:M\rightarrow \mathbb E^{n+1}$ its position vector with the tangential component $x^T$. Then,  $x^T$ is a principal direction if and only if $Y(\theta)=0$ whenever $Y$  is a vector field tangent to $M$ and orthogonal to $x^T$.
\end{Prop}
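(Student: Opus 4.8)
The plan is to extract the condition from the master identity \eqref{DexompofxRes3} by testing it against suitable tangent vectors and against the normal $N$. The key structural fact I would exploit is that $e_1$ is by definition the unit vector in the direction of $x^T$, so ``$x^T$ is a principal direction'' is equivalent to ``$e_1$ is an eigenvector of $S$,'' i.e. $Se_1 = k_1 e_1$ for some function $k_1$, which in turn is equivalent to $\langle Se_1, Y\rangle = 0$ for every $Y$ tangent to $M$ and orthogonal to $e_1$. So the whole proposition reduces to relating the quantities $\langle Se_1, Y\rangle$ and $Y(\theta)$ through \eqref{DexompofxRes3}.

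First I would take an arbitrary tangent vector field $Y$ orthogonal to $x^T$ (equivalently to $e_1$) and substitute $X = Y$ into \eqref{DexompofxRes3}. Because $\langle Y, e_1\rangle = 0$, several terms drop out immediately, and the identity collapses to an expression of $Y$ in terms of $\mu\cos\theta\, Y(\theta)e_1$, $\mu\sin\theta\,\nabla_Y e_1$, $-\mu\cos\theta\, SY$, the normal part $\mu\sin\theta\, h(Y,e_1)$, and $-\mu\sin\theta\, Y(\theta)N$. I would then compare components. Projecting onto the normal direction $N$ and using that the genuinely tangential vector $Y$ has no $N$-component forces a relation involving $Y(\theta)$ and $\langle SY, e_1\rangle = \langle Se_1, Y\rangle$; this is the crux. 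The cleanest route is probably to instead substitute $X = e_1$ into \eqref{DexompofxRes3}, read off the resulting expression for $e_1$, and take the inner product of the whole of \eqref{DexompofxRes3} (with $X=Y$) against $e_1$ and against a third direction, so that I can isolate $Y(\theta)$ on one side and $\langle Se_1,Y\rangle$ on the other. Since the coefficients $\mu\sin\theta$ and $\mu\cos\theta$ cannot both vanish (as $\mu \neq 0$ where $x \neq 0$), I can divide through and obtain a clean equivalence.

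Concretely, I expect the identity, after projecting \eqref{DexompofxRes3} with $X=Y$ onto the $e_1$-$N$ plane and using orthogonality, to yield a scalar equation of the schematic form $\mu\sin\theta\,\langle Se_1, Y\rangle = (\text{nonzero factor})\cdot Y(\theta)$, so that $Y(\theta) = 0$ for all such $Y$ holds if and only if $\langle Se_1, Y\rangle = 0$ for all such $Y$, which is exactly the statement that $e_1 = x^T/|x^T|$ is a principal direction. I would also need to handle the $\mu\sin\theta\,\nabla_Y e_1$ term: its $e_1$-component vanishes because $e_1$ is unit ($\langle \nabla_Y e_1, e_1\rangle = 0$), which is why projecting onto $e_1$ and $N$ is convenient, and its component along other tangent directions must be tracked or shown to cancel against the tangential part of $SY$.

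\textbf{The main obstacle} will be the bookkeeping of the $\nabla_Y e_1$ term and the tangential part of $SY$: these live in the full tangent space, not just the $e_1$-$N$ plane, so I must be careful to isolate exactly the scalar combination that pairs $Y(\theta)$ with $\langle Se_1, Y\rangle$ without being contaminated by components of $\nabla_Y e_1$ in directions orthogonal to $e_1$. My expectation is that projecting the defining identity \eqref{DexompofxRes3} onto the normal $N$ (where $\nabla_Y e_1$ contributes nothing, being tangent) gives the cleanest equation directly linking $Y(\theta)$ to the normal part and hence to the shape operator, making the normal projection the decisive step; the degenerate locus $\sin\theta = 0$ (where $x$ is purely normal and $x^T = 0$, so ``principal direction'' is vacuous) would be noted separately.
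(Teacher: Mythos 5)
Your proposal is correct and follows essentially the same route as the paper: substituting $X=Y$ into \eqref{DexompofxRes3} and projecting onto the normal direction $N$, which annihilates the tangential terms $\nabla_Y e_1$ and $SY$ and leaves exactly $\mu\sin\theta\, h(Y,e_1)=\mu\sin\theta\, Y(\theta)N$, so that \eqref{AhRelated} gives the equivalence $\langle Se_1,Y\rangle=0 \Leftrightarrow Y(\theta)=0$. The paper's proof is precisely this normal projection (its equation \eqref{DexompofxRes4}), and your separate remark about the degenerate locus $\sin\theta=0$ is a point the paper leaves implicit.
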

\begin{proof}
By the notation described above, the normal part of the equation \eqref{DexompofxRes3} for $X=Y$ gives 
\begin{eqnarray}
\label{DexompofxRes4}\mu\sin\theta h(Y,e_1)+\langle Y,e_1\rangle \sin\theta\cos\theta N-\mu\sin\theta Y(\theta)N=0.
\end{eqnarray}
Note that $x^T$  is a principal direction of $M$ if and only if  $h(Y,e_1)=0$ whenever $Y$ is orthogonal to $e_1$ because of \eqref{AhRelated}. However,  from \eqref{DexompofxRes4} one can observe that this condition is equivalent to   $Y(\theta)=0$. 
\end{proof}
\begin{Remark}
See  \cite[Proposition 2.1]{YuFu2014GCRS} and \cite[Theorem 1]{MunteanuNistor2011} for the case $n=2$.
\end{Remark}
Therefore, we would like to give the following definition.
\begin{Def}
A hypersurface is said to be a  generalized constant ratio (GCR) hypersurface if the tangential part $x^T$ of its position vector is one of its principal directions, or, equivalently, by the notation above,   $Y(\theta)=0$ whenever $Y\in\Gamma(TM)$  is orthogonal to $x^T$.
\end{Def}

Now, let $M$ be a GCR hypersurface in the Euclidean space $\mathbb E^{n+1}$ and $\{e_1,e_2,\hdots,e_n;N\}$ be the local orthonormal frame field consisting of the principal directions of $M$ with corresponding principal curvatures $k_1,k_2,\hdots,k_n$. We assume that $e_1$ is proportional to the tangential component $x^T$ of the position vector $x$ of $M$. Then, from \eqref{DexompofxRes3} for $X=e_1$, one can obtain 
\begin{subequations}\label{GCRCondall12}
\begin{eqnarray}
\label{GCRCond1} \nabla_{e_1}e_1&=&0,\\
\label{GCRCond2} k_1&=&e_1(\theta)-\frac{\cos\theta}\mu.
\end{eqnarray}
\end{subequations}

We state the following proposition which is a direct result of \eqref{GCRCondall12}.
\begin{Prop}\label{PropConnBicandGCR}
Let $\alpha$ be an integral curve of $e_1$. Then, $\alpha$  is a   geodesic and a line of curvature of $M$. Furthermore, $\alpha$ is planar and its curvature in $\mathbb E^{n+1}$ is $\left.k_1\right|_\alpha$
\end{Prop}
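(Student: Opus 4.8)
The plan is to extract all four assertions directly from the structural relations \eqref{GCRCond1}--\eqref{GCRCond2}, together with the Gauss and Weingarten formulas \eqref{MEtomGauss}--\eqref{MEtomWeingarten}. Parametrize $\alpha$ by arc length, so that $\alpha'=e_1$ along $\alpha$.

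The geodesic and line-of-curvature claims are immediate. The geodesic equation reads $\nabla_{\alpha'}\alpha'=\nabla_{e_1}e_1=0$, which is exactly \eqref{GCRCond1}; and since $e_1=\alpha'$ is a principal direction by the choice of frame, the tangent to $\alpha$ is everywhere principal, which is the definition of a line of curvature. To identify the curvature in $\mathbb E^{n+1}$, I would compute the ambient acceleration by the Gauss formula,
$$\widetilde\nabla_{e_1}e_1=\nabla_{e_1}e_1+h(e_1,e_1)=k_1N,$$
using \eqref{GCRCond1} and $h(e_1,e_1)=k_1N$ (the latter from \eqref{AhRelated} and $Se_1=k_1e_1$). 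Hence $\alpha''=k_1N$, so the curvature of $\alpha$ is $\left|k_1\right|$ with principal normal $N$; that is, it equals $\left.k_1\right|_\alpha$.

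The only step requiring a genuine argument is planarity. From \eqref{MEtomWeingarten}, and since the normal bundle of a hypersurface is one-dimensional (so $\nabla^\perp N=0$), I obtain $\widetilde\nabla_{e_1}N=-Se_1=-k_1e_1$. Together with $\widetilde\nabla_{e_1}e_1=k_1N$, this shows that the two-plane $\Pi=\mathrm{span}\{e_1,N\}$ is invariant under ambient differentiation along $\alpha$. Because $\widetilde\nabla$ is flat, I would then upgrade this invariance to the statement that $\Pi$ is a fixed plane along $\alpha$: for any ambient-constant vector $w$ orthogonal to $\Pi$ at one point of $\alpha$, the functions $\langle w,e_1\rangle$ and $\langle w,N\rangle$ solve the linear system $f'=k_1g$, $g'=-k_1f$ with vanishing initial data, hence vanish identically by uniqueness for ordinary differential equations. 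Therefore $\alpha$ remains in the affine $2$-plane through $\alpha(t_0)$ spanned by $\Pi$, so it is planar. This rigidity step is the main (and essentially the only) obstacle; the remaining conclusions fall straight out of \eqref{GCRCondall12}.
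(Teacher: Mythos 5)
Your proof is correct, and it takes the route the paper intends: the paper states this proposition without proof as a ``direct result'' of \eqref{GCRCondall12}, and your argument simply fills in the details --- the geodesic and line-of-curvature claims from \eqref{GCRCond1} and the choice of frame, the curvature identification from the Gauss formula, and planarity via the Weingarten formula plus the standard ODE/uniqueness rigidity argument showing that $\mathrm{span}\{e_1,N\}$ is a fixed plane along $\alpha$. The only point worth noting is that the curvature of $\alpha$ as a space curve is $\left|k_1\right|$, with $k_1$ itself arising as the signed curvature relative to the normal $N$, which you already observe.
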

\begin{Remark}
Because of this proposition, one can expect that several geometrical properties of  constant ratio hypersurfaces are coinciding with  geometrical properties of biconservative hypersurfaces (see \cite[Lemma 2.2]{HsanisField}).
\end{Remark}
Furthermore, \eqref{DexompofxRes2}, \eqref{DexompofxRes3} for $X=e_i$ and Proposition \ref{PROPOPP1} imply
\begin{subequations}\label{GCRCondall34}
\begin{eqnarray}
\label{GCRCond3} e_i(\theta)&=&e_i(\mu)=0,\\
\label{GCRCond4}  \nabla_{e_i}e_1&=&\frac{1+\mu\cos\theta k_i}{\mu\sin\theta} e_i,\quad i=2,3,\hdots,n.
\end{eqnarray}
\end{subequations}
\begin{Remark}\label{Poincare}
By combaining \eqref{GCRCond1} and \eqref{GCRCond4} with Cartan's first structural equation \eqref{CartansFirstStructural} for $i=1$, we obtained $d\theta_1=0$, i.e., $\theta_1$ is closed. Thus, Poincar\`e lemma implies that $d\theta_1$ is exact, i.e., there exists a function $s$ such that $\theta_1=ds$ which implies $e_1=\partial_s.$
\end{Remark}

\section{Classifications of GCR hypersurfaces in $\mathbb E^4$}

Let $M$ be a GCR hypersurfaces in $\mathbb E^4$. Then, from \eqref{GCRCond4} we have 
\begin{equation}
\label{E4GCRCond0} \omega_{12}(e_3)=\omega_{13}(e_2)=0
\end{equation}
Next, we apply the Codazzi equation \eqref{MinkCodazzi} for $X=e_i,Y=e_j,Z=e_k$ for each triplet (i, j, k) in the set
$\{(1,2,1),$ $(1,3,1),$ $(2,1,2),$ $ (3,1,3), $ $(2,1,3),(3,1,3), $ $(2,3,2)\}$
and combine equations obtained with \eqref{GCRCondall12} and \eqref{GCRCondall34} to get
\begin{subequations}\label{E4GCRCond1}
\begin{eqnarray}
\label{E4GCRCond1a} e_2(k_1)&=&e_3(k_1)=0,\\
\label{E4GCRCond1b} e_1(k_2)&=&\frac{1+\mu\cos\theta k_2}{\mu\sin\theta}(k_1-k_2),\\
\label{E4GCRCond1c} e_1(k_3)&=&\frac{1+\mu\cos\theta k_3}{\mu\sin\theta}(k_1-k_3),\\
\label{E4GCRCond1d} \omega_{23}(e_1) (k_2-k_3)&=&0,\\
\label{E4GCRCond1e} e_2(k_3)&=&\omega_{23}(e_3) (k_2-k_3),\\
\label{E4GCRCond1f} e_3(k_2)&=&\omega_{23}(e_2) (k_2-k_3).
\end{eqnarray}
On the other hand, because of \eqref{GCRCond1}, we have $\langle\nabla_{e_1}e_2,e_1\rangle=0$ which implies $\langle[e_1,e_2],e_1\rangle=0$. Therefore, we obtain  $[e_1,e_2](k_1)=0$.  By a similar way, one can get $[e_1,e_3](k_1)=0$. By combining these equations with \eqref{E4GCRCond1a}, we obtain
\begin{equation}\label{E4GCRCond1g}
 e_2e_1(k_1)=e_3e_1(k_1)=0.
\end{equation}
\end{subequations}


\subsection{Examples of GCR Hypersurfaces in $\mathbb E^4$}
In this subsection, we obtain some examples of  GCR hypersurfaces in $\mathbb E^4$. We will use these results later.

\begin{Prop}\label{GCRE4Example1}
Let $M$ be a hypercylinder over a rotational surface $\mathbb E^4$ given by  
\begin{equation}\label{E4HypoverRot}
x(s,t,u)=(f(s)\cos t,f(s)\sin t, g(s),u)
\end{equation}
Then, $M$ is a GCR hypersurface if and only if it is congruent to one of the following hypersurfaces.
\begin{enumerate}
\item[(i)] A hyperplane $\mathbb E^3$,

\item[(ii)] A spherical hypercylinder $\mathbb S^{2}(r^2)\times \mathbb E$, 

\item[(iii)] A circular hypercylinder $\mathbb S^{1}(r)\times \mathbb E^2$, 

\item[(iv)] A hypercylinder over a cone given by
\begin{equation}\label{CncHyperclinder}
x(s,t,u)=((c_1s+c_2)\cos t,(c_1s+c_2)\sin t, c_2s,u) 
\end{equation}
for some non-zero constants $c_1$ and $c_2$.
\end{enumerate}
All of these hypersurfaces have two distinct principal curvatures.
\end{Prop}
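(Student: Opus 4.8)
The plan is to compute the geometry of the hypercylinder explicitly and then test the GCR condition from Proposition \ref{PROPOPP1}. First I would reparametrize the profile curve to unit speed, $f'^2+g'^2=1$, and record the coordinate fields $x_s,x_t,x_u$ together with the induced metric $ds^2+f(s)^2\,dt^2+du^2$, which is diagonal. A direct computation gives the unit normal $N=(g'\cos t,g'\sin t,-f',0)$, and differentiating it shows that $\widetilde\nabla_{x_s}N$, $\widetilde\nabla_{x_t}N$, $\widetilde\nabla_{x_u}N$ are multiples of $x_s,x_t,x_u$ respectively; hence these are the principal directions, with principal curvatures
$$k_s=f''g'-g''f',\qquad k_t=-\frac{g'}{f},\qquad k_u=0.$$
(For a general parametrization the same formulas hold after dividing by the appropriate power of $W=\sqrt{f'^2+g'^2}$.)

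Next I would project the position vector onto $TM$. Since $\langle x,x_t\rangle=0$, $\langle x,x_s\rangle=ff'+gg'$ and $\langle x,x_u\rangle=u$, one obtains
$$x^{T}=(ff'+gg')\,x_s+u\,x_u,$$
so $x^{T}$ always lies in $\mathrm{span}\{x_s,x_u\}$ and has no $x_t$-component. Applying the diagonal shape operator gives $S(x^{T})=(ff'+gg')k_s\,x_s$, and requiring $x^{T}$ to be an eigenvector forces, at every point with $u\neq 0$, the eigenvalue to vanish and therefore
$$(ff'+gg')\,k_s=0.$$
Because both factors depend on $s$ alone while $u$ sweeps an interval, this is exactly the GCR condition for the whole family.

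The heart of the argument is the resulting dichotomy. Either $k_s\equiv 0$, i.e. $f''g'-g''f'=0$, so the profile is a straight line; or $ff'+gg'\equiv 0$, i.e. $f^2+g^2=r^2$ is constant, so the profile is an arc of a circle centred at the origin and $M$ is the spherical cylinder $\mathbb S^2(r^2)\times\mathbb E$ of case (ii). In the straight-line case I would split by slope: $f'=0$ forces $f\equiv r$ and yields the circular cylinder $\mathbb S^1(r)\times\mathbb E^2$ of (iii); $g'=0$ forces $g\equiv\mathrm{const}$ and yields the hyperplane of (i); and a genuinely slanted line yields the hypercylinder over a cone of (iv). In each subcase I would substitute back into the formulas for $k_s,k_t,k_u$ to verify that (ii), (iii), (iv) indeed carry exactly two distinct principal curvatures (while (i) is totally geodesic).

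The two steps I expect to cost the most care are the following. First, upgrading the pointwise relation $(ff'+gg')k_s=0$ on a connected domain to the clean global alternative: one works on the open set where $k_s\neq 0$, where necessarily $ff'+gg'=0$, and excludes genuinely mixed profiles, e.g. by invoking real-analyticity of the data or by passing to the connected open pieces on which the number of distinct principal curvatures is constant. Second, normalizing the slanted-line case to the precise form \eqref{CncHyperclinder}: after a translation along the $x_3$-axis to remove the intercept of $g$ and a rescaling of $s$, the line $f=a_1 s+b_1,\ g=a_2 s$ can be brought to $f=c_1 s+c_2,\ g=c_2 s$, identifying $M$ with the stated cone cylinder up to a rigid motion of $\mathbb E^4$.
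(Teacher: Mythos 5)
Your proposal is correct and follows essentially the same route as the paper's own proof: normalize the profile to unit speed, compute the unit normal and the principal directions $\partial_s,\partial_t,\partial_u$ with curvatures $\kappa,\,-g'/f,\,0$, decompose $x^T=(ff'+gg')\,\partial_s+u\,\partial_u$, and reduce the GCR condition to the dichotomy $(ff'+gg')\,\kappa\equiv 0$, whose two branches give the circle case (ii) and the line cases (i), (iii), (iv). One small caution: since $f,g$ are only assumed smooth, your fallback on real-analyticity to rule out mixed profiles is not available, but your other suggestion does work (on any connected component of $\{\kappa\neq 0\}$ one has $f^2+g^2\equiv r^2$, hence $|\kappa|\equiv 1/r$ there and on its closure, so the component is clopen), and this is a point the paper's proof passes over silently.
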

\begin{proof}
Without loss of generality, we may assume $f'^2+g'^2=1.$ By a simple computation, one can obtain that the unit normal vector field of $M$ is $N=(-g'(s)\cos t,-g'(s)\sin t, f(s),0)$ while its principal directions are $\partial_s,\partial_t,\partial_u$ with corresponding principal curvatures of $M$ are $\kappa,-g'/g,0$, where $\kappa$ is the curvature of the curve $\alpha=(f,g)$ on $\mathbb E^2$. By decomposing $x$, we have
$$x^T=(ff'+gg')\partial_s+s\partial_u.$$
Therefore, $M$ is GCR if and only if either $ff'+gg'=0$ or $\kappa=0$. 

If $ff'+gg'=0$ shows then, $\alpha$ becomes a circle which gives the case (ii). If $\kappa=0$, then $\alpha$ is a line. Thus, we have \eqref{CncHyperclinder} for some constants $c_1$ and $c_2$ such that $c_1^2+c_2^2=1$. Note that $c_1=0$ and $c_2=0$ give cases (iii) and (i) of the theorem, respectively. The remaining case shows that $M$ is a conical hypercylinder.

This proves the necessary part of the proposition and the sufficient part follows from a direct computation. 
\end{proof}

We would like to give a generalization of the above result by obtaining the following proposition.
\begin{Prop}\label{GCRE4Example1a}
Let $M$ be a hypersurface $\tilde M\times\mathbb R $ in $\mathbb E^4$, where $\tilde M$ is a surface in $\mathbb E^3$. Then, $M$ is a $GCR$ hypersurface if and only if $\tilde M$ is a sphere, a plane, a cylinder or a tangent developable surface.
\end{Prop}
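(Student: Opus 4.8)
The plan is to reduce the GCR condition for the product $M=\tilde M\times\mathbb R\subset\mathbb E^3\times\mathbb R=\mathbb E^4$ to a condition on $\tilde M$ alone, and then to classify the surfaces that satisfy it. First I would record the product geometry: writing the position vector as $x=(y,u)$, where $y$ is the position vector of $\tilde M$ in $\mathbb E^3$ and $u$ is the coordinate on the $\mathbb R$-factor, the field $\partial_u$ is a parallel unit tangent field of $M$, so the unit normal is $N=(\tilde N,0)$ with $\tilde N$ the unit normal of $\tilde M$. Consequently $S\partial_u=0$, while $S$ agrees with the shape operator $\tilde S$ of $\tilde M$ on $T\tilde M$; thus the principal curvatures of $M$ are $\tilde k_1,\tilde k_2$ and $0$, the last in the direction $\partial_u$. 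Since $\langle x,N\rangle=\langle y,\tilde N\rangle$, decomposing the position vector yields
\begin{equation*}
x^T=y^T+u\,\partial_u ,
\end{equation*}
where $y^T$ is the tangential part of $y$ along $\tilde M$.

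The key step is the following reduction. Applying the shape operator gives $S(x^T)=\tilde S(y^T)$, which lies in $T\tilde M$ and hence carries no $\partial_u$-component, whereas the $\partial_u$-component of $x^T$ equals $u$. Therefore at any point with $u\neq0$ the vector $x^T$ can be a principal direction only for the eigenvalue $0$, and this forces $\tilde S(y^T)=0$. Letting $u$ vary over $\mathbb R$, I would conclude that $M$ is GCR if and only if $\tilde S(y^T)=0$ at every point of $\tilde M$, i.e. the tangential part of the position vector of $\tilde M$ always lies in $\ker\tilde S$. This is the clean invariant form of the statement.

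It then remains to determine which surfaces $\tilde M$ satisfy $\tilde S(y^T)\equiv 0$, and I would organize this by $\mathrm{rank}\,\tilde S$. Where $\mathrm{rank}\,\tilde S=2$ one has $\ker\tilde S=\{0\}$, so $y^T=0$; then $y$ is normal to $\tilde M$ everywhere, $\langle y,y\rangle$ is constant, and $\tilde M$ is an open part of a sphere centered at the origin. Where $\tilde S=0$, the surface is totally geodesic, hence a piece of a plane. Where $\mathrm{rank}\,\tilde S=1$ the Gaussian curvature $\tilde k_1\tilde k_2$ vanishes, so $\tilde M$ is a developable (ruled, flat) surface, and the condition $y^T\in\ker\tilde S$ says exactly that $y^T$ points along the ruling direction. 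Parametrizing the surface by its rulings as $y(s,v)=\gamma(s)+v\,\delta(s)$ and imposing that $y^T$ be proportional to $\delta$, I would recover the classical developable models --- cylinders, cones and tangent developables --- which realize the remaining (cylinder and tangent-developable) cases of the statement. A continuity/connectedness argument, together with the rigidity of each local model, then shows that a connected $\tilde M$ belongs to a single family; the converse direction is checked by a direct computation for each family, exactly as in Proposition \ref{GCRE4Example1}.

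I expect the rank-one case to be the main obstacle. The sphere and plane cases are immediate from $y^T=0$ and $\tilde S=0$ respectively, but the developable case requires the structure theory of flat ruled surfaces and careful bookkeeping of how the position vector meets the rulings: one must verify that the constraint $y^T\parallel\delta$ is simultaneously compatible with and forces each admissible ruled configuration, control the loci where the ruling direction becomes stationary (the transition between the cylinder, cone and tangent-developable models), and pin down the position of each surface relative to the origin. Essentially all of the geometric content of the proposition is concentrated here.
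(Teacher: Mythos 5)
Your reduction is correct, and it is in substance the same first step as the paper's: writing $x^T=y^T+u\,\partial_u$ and using $S\partial_u=0$, the eigenvalue equation $S(x^T)=\lambda x^T$ forces $\lambda u=0$, hence $\tilde S(y^T)=0$ on the open dense set $u\neq 0$ and therefore on all of $\tilde M$. This is exactly what the paper deduces (in the words ``$\tilde x$ must be a flat GCR surface in $\mathbb E^3$'', after setting aside the sphere): if $y^T\equiv 0$ one gets a sphere centered at the origin (your rank-two case), and otherwise $y^T$ is a principal direction of $\tilde M$ for the eigenvalue $0$ and $\det\tilde S=0$, i.e.\ $\tilde M$ is flat and GCR (your rank $\leq 1$ cases). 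So the decomposition and the key mechanism agree with the paper, and your formulation $\tilde S(y^T)=0$ is if anything sharper.

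The genuine gap is the step you yourself flag as ``the main obstacle'' and leave as a plan: classifying the flat surfaces with $y^T\in\ker\tilde S$. That is precisely the classification of flat GCR surfaces in $\mathbb E^3$, and the paper does not reprove it either --- it closes the argument by citing \cite[Proposition 3.3]{YuFu2014GCRS}. As written, your proposal establishes the reduction but not the classification; to complete it you must either invoke that result or actually do the ruled-surface computation (for a cylinder $y=\gamma+zv$ the condition is $\langle\gamma,\gamma'\rangle\equiv 0$, for a cone with vertex $p$ and spherical directrix $\beta$ it is $\langle p,\beta'\rangle\equiv 0$, for the tangent developable of a unit-speed curve $\gamma$ it is $\langle\gamma,\gamma''\rangle\equiv 0$). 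Two related cautions. First, carrying this out genuinely produces cones --- for instance any cone with vertex at the origin; indeed the conical hypercylinder of Proposition \ref{GCRE4Example1}(iv) is itself of the form $\tilde M\times\mathbb R$ --- and a cone is neither a cylinder nor a tangent developable, so your parenthetical absorption of cones into ``the (cylinder and tangent-developable) cases'' is not legitimate; this is a tension with the proposition's stated list, not a flaw in your method, and it should be stated rather than smoothed over. Second, your computation shows that only representatives in special position relative to the origin qualify (a cylinder over an ellipse fails, for example), so the ``if'' direction of the statement holds only with the surfaces normalized as above --- a precision your approach makes visible and the paper leaves implicit in the citation.
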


\begin{proof}
Since a spherical hypercylinder is GCR, we ignore this case and, without loss of generality, we assume that the position vector of $M$ is $x(s,t,u)=(\tilde x(s,t),u)$, where $\tilde x(s,t)=(x_1(s,t),x_2(s,t),x_3(s,t))$ is the position vector of $\tilde M$. By a similar method to proof of Proposition \ref{GCRE4Example1}, we deduce that $\tilde x$ must be a flat GCR surface in $\mathbb E^3$. By using \cite[Proposition 3.3]{YuFu2014GCRS}, we obtain remaining cases.
\end{proof}

\begin{Example}\label{ExO2O2InvHypSurf}
Let $M$ be an $SO(2)\times SO(2)$-invariant hypersurface in $\mathbb E^4$ given by
\begin{equation}\label{O2O2InvHypSurf}
x(s,t,u)=(f(s)\cos t,f(s)\sin t, g(s)\cos u,g(s)\sin u).
\end{equation}
Then, the unit normal vector field of $M$ is $N=(-g'(s)\cos t,-g'(s)\sin t, f'(s)\cos u,f'(s)\sin u)$ and principal curvatures of $M$ are $\kappa,\ -f'/g$ and $g'/f$ with  corresponding principal directions $\partial_s,\partial_t,\partial_u$, where $\kappa$ is the curvature of the profile curve  $\alpha=(f,g)$ in $\mathbb E^2$.
\end{Example}

\begin{Remark}
See \cite{MontaldoAnnali} for several geometrical properties of $SO(p)\times SO(q)$-invariant hypersurfaces in  Euclidean spaces.
\end{Remark}

We will use the following lemma.
\begin{Prop}\label{GCRE4Example2}
Let $M$ be the hypersurface given by \eqref{O2O2InvHypSurf} for some smooth functions $f,g$. Then, $M$ is a GCR hypersurface.
\end{Prop}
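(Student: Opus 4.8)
The plan is to show that the $SO(2)\times SO(2)$-invariant hypersurface \eqref{O2O2InvHypSurf} is GCR by verifying directly that the tangential component $x^T$ of the position vector is a principal direction. Example \ref{ExO2O2InvHypSurf} already hands us the principal directions $\partial_s,\partial_t,\partial_u$ with principal curvatures $\kappa,-f'/g,g'/f$ and the explicit unit normal $N$; by the very definition of GCR hypersurface, it suffices to prove that $x^T$ is proportional to one of these principal directions, and the natural candidate is $\partial_s$.

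First I would decompose the position vector. Computing $\langle x,\partial_t\rangle$ and $\langle x,\partial_u\rangle$ should both vanish, since $\langle x,\partial_t\rangle = -f^2\cos t\sin t + f^2\sin t\cos t = 0$ and similarly for $\partial_u$; this is the key orthogonality fact and reflects the rotational symmetry in each $SO(2)$ factor. Meanwhile $\langle x,\partial_s\rangle = ff'+gg'$ is generally nonzero. Hence the tangential part $x^T$ lies entirely along $\partial_s$, i.e.\ $x^T=(ff'+gg')\,\partial_s$ (after normalizing $\partial_s$ to a unit field, which is automatic once we assume $f'^2+g'^2=1$ as in the proof of Proposition \ref{GCRE4Example1}). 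Since $\partial_s$ is a principal direction by Example \ref{ExO2O2InvHypSurf}, this immediately makes $x^T$ a principal direction, which is exactly the GCR condition.

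The only subtlety I anticipate is handling the points or parameter values where $ff'+gg'=0$, so that $x^T$ vanishes and the ``principal direction'' interpretation degenerates; there the GCR condition is trivially satisfied (or one argues by continuity/density that the generic behavior determines the classification). I would also double-check that $\langle x,N\rangle$ behaves consistently with the definitions \eqref{DefmuExp1}--\eqref{DefthetaExp1}, but this is a routine consistency check rather than a real obstacle. In short, the entire argument reduces to the orthogonality $\langle x,\partial_t\rangle=\langle x,\partial_u\rangle=0$, after which the conclusion is immediate from the definition of a GCR hypersurface; I expect no genuine difficulty, only the bookkeeping of the degenerate locus.
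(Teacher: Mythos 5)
Your proposal is correct and follows essentially the same route as the paper: under the normalization $f'^2+g'^2=1$, the paper simply writes the decomposition $x=(ff'+gg')\partial_s+(-fg'+f'g)N$ and invokes that $\partial_s$ is a principal direction, which is exactly what your orthogonality computations $\langle x,\partial_t\rangle=\langle x,\partial_u\rangle=0$ establish. Your extra remark about the degenerate locus $ff'+gg'=0$ is a reasonable bit of care that the paper omits, but it does not change the argument.
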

\begin{proof}
Without loss of generality, we may assume $f'^2+g'^2=1$. In this case, we have $x(s,t,u)=(ff'+gg')\partial_s+(-fg'+f'g)N$.  Moreover, $e_1=\partial_s$ is a principal direction. Hence,  $M$ is a GCR hypersurface.
\end{proof}

Similarly, we have
 \begin{Prop}\label{GCRE4Example3}
Let $M$ be a rotational hypersurface given by
\begin{equation}\label{RotHypSurf}
x(s,t,u)=(f(s),g(s)\cos t, g(s)\sin t\sin u, g(s)\sin t\cos u)
\end{equation}
for some smooth functions $f,g$ in $\mathbb E^4$. Then, $M$ is a GCR hypersurface.
\end{Prop}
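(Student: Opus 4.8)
The plan is to prove Proposition \ref{GCRE4Example3} by the same elementary strategy used for Propositions \ref{GCRE4Example1a} and \ref{GCRE4Example2}: exhibit the tangential component $x^T$ of the position vector explicitly and recognize that it is a scalar multiple of a principal direction, namely $e_1 = \partial_s$. By the definition of GCR, this is exactly what needs to be verified.

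First I would normalize by assuming $f'^2+g'^2=1$, which amounts to an arclength reparametrization of the profile curve $\alpha=(f,g)$ and costs no generality. Next I would compute the coordinate tangent frame $\partial_s, \partial_t, \partial_u$ from \eqref{RotHypSurf} and determine a unit normal $N$ orthogonal to all three; the rotational structure (an $SO(3)$-action on the last three coordinates, fibering over the meridian curve) makes $\partial_s, \partial_t, \partial_u$ mutually orthogonal, and one expects $N=(-g'(s), f'(s)\cos t, f'(s)\sin t\sin u, f'(s)\sin t\cos u)$ by analogy with Example \ref{ExO2O2InvHypSurf}. A short verification using $f'^2+g'^2=1$ confirms $\langle N,N\rangle=1$ and $\langle N,\partial_s\rangle=\langle N,\partial_t\rangle=\langle N,\partial_u\rangle=0$, so $\partial_s$ is indeed a well-defined principal direction (the coordinate vector fields of such rotational hypersurfaces diagonalize the shape operator).

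The decisive computation is the decomposition $x = x^T + x^\perp$. Projecting $x$ onto $\partial_s$ gives the tangential coefficient $\langle x,\partial_s\rangle/\langle\partial_s,\partial_s\rangle$, and projecting onto $N$ gives the normal coefficient $\langle x, N\rangle$; the key point is to check that the $\partial_t$ and $\partial_u$ components of $x$ vanish. Since $\langle x,\partial_t\rangle$ and $\langle x,\partial_u\rangle$ are computed from \eqref{RotHypSurf} and, for instance, $\partial_t x = (0, -g\sin t, g\cos t\sin u, g\cos t\cos u)$, a direct inner product shows $\langle x,\partial_t\rangle = 0$ (the radial position vector is orthogonal to the circular orbit directions), and similarly $\langle x,\partial_u\rangle=0$. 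Hence $x^T$ is proportional to $\partial_s = e_1$, which is a principal direction, so $M$ is GCR.

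I do not expect a genuine obstacle here: the proposition is a direct-computation example rather than a theorem with structural content, and the phrase \emph{Similarly, we have} signals that it mirrors Proposition \ref{GCRE4Example2} verbatim in method. The only place requiring minor care is confirming that the orbit directions $\partial_t$ and $\partial_u$ are orthogonal to the position vector $x$ — this is what forces $x^T$ to lie along $\partial_s$ — and verifying that the proposed $N$ is genuinely the unit normal. Both reduce to one-line inner-product checks using $f'^2+g'^2=1$, so the entire argument can be compressed into the same two-sentence form as the proof of Proposition \ref{GCRE4Example2}.
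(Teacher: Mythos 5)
Your proposal is correct and follows essentially the same route as the paper: the paper's (implicit, ``Similarly'') proof is exactly that of Proposition \ref{GCRE4Example2}, namely normalize $f'^2+g'^2=1$, write $x=(ff'+gg')\partial_s+(gf'-fg')N$, and note that $e_1=\partial_s$ is a principal direction. Your additional inner-product checks (that $\langle x,\partial_t\rangle=\langle x,\partial_u\rangle=0$ and that the stated $N$ is the unit normal) are accurate and merely make explicit what the paper leaves to the reader.
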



Before we proceed to our main results, we would like to give two examples of GRC hypersurfaces with vanishing Gauss-Kronecker curvature.

\begin{Example}\label{ProgTangHypsurf}
Let $y:\Omega\rightarrow\mathbb E^4$ be an oriented regular surface in $\mathbb S^3(1)\subset\mathbb E^4$ with the spherical normal $n$, where $\Omega$ is an open subset in $\mathbb R^2$. Consider the hypersurface $M$ given by
\begin{equation}\label{ProgTangHypsurfPosVect}
x(s,v,w)=sy(v,w)+cn(v,w),
\end{equation}
where $c$ is a constant. Then, by a direct computation, one can check that $\partial_s$ is a principal direction with the corresponding principal curvature $k_1=0$. Moreover, the unit normal vector field of $M$ is $N=n.$ Since the position vector of this hypersurface is expressed as $x=s\partial_s+cN$, it is a GCR hypersurface. Moreover, its Gauss-Kronecker curvature vanishes identically.
\end{Example}

\begin{Example}\label{ProgTangHypsurfType2}
Let $\alpha(w)$ be a unit speed curve lying on  $\mathbb S^3(1)\subset\mathbb E^4$ and $A(w), B(w)$ two orthonormal vector fields spanning the normal space of $\alpha$ in  $\mathbb S^3(1)$, i.e.,
\begin{equation}\label{ProgTangHypsurfType2Eq1}
\langle A,B\rangle=\langle A,\alpha\rangle=\langle A,\alpha'\rangle=\langle B,\alpha\rangle=\langle B,\alpha'\rangle=0,\quad \langle A,A\rangle=\langle B,B\rangle=1.
\end{equation}
Consider the hypersurface in $\mathbb E^4$ given by
\begin{equation}\label{ProgTangHypsurfType2PosVect}
x(s,v,w)=s\alpha(w)+c\left(\cos \frac vc A(w)+\sin \frac vc B(w)\right)
\end{equation}
for a constant  $c>0$. Then, by a direct computation, one can check that the unit normal vector field of $M$ is $\displaystyle N= \cos \frac vc A(w)+\sin \frac vc B(w)$ and principal directions of $M$ are $e_1=\partial_s,e_2=\partial_v$  $\displaystyle e_3=\frac{1}{\|x_w\|}\partial_w$ with the corresponding principal curvatures $0,-1/c,k_3$. Hence, $M$ is a GCR hypersurface with $\widetilde\nabla_{e_1}e_1=\widetilde\nabla_{e_2}e_1=0$ and $=\widetilde\nabla_{e_2}e_2=-N/c$

\end{Example}
\subsection{$\delta(2)$-ideal hypersurfaces}
In this subsection, we obtain classifications  of $\delta(2)$-ideal GCR hypersurfaces.

First of all, in \cite{ChenIDeal2E4}, Chen proved that a $\delta(2)$-ideal hypersurface in $\mathbb E^4$ is congruent to either a spherical cylinder or a rotational hypersurface given by \eqref{RotHypSurf} for some particularly chosen functions $(f,g)$. Therefore, by combining  \cite[Theorem 4.1]{ChenIDeal2E4} with Proposition \ref{GCRE4Example1} and Proposition \ref{GCRE4Example3}, we obtain  
\begin{Prop}
Let $M$ be a hypersurface in the Euclidean space  $\mathbb E^4$ with two distinct principal curvatures. If $M$ is a $\delta(2)$-ideal hypersurface, then it is GCR.
\end{Prop}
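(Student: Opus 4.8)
The plan is to combine the structural classification of $\delta(2)$-ideal hypersurfaces in $\mathbb E^4$ from \cite{ChenIDeal2E4} with the GCR examples already established in this section. By the hypothesis, $M$ has two distinct principal curvatures and is $\delta(2)$-ideal; by Lemma \ref{Delta2InvLemma} its principal curvatures must be of the form $k_1,k_2,k_1+k_2$. The constraint of having only two distinct values among $\{k_1,k_2,k_1+k_2\}$ is what restricts us to the concrete models, so first I would enumerate which equalities force this. If $k_1=k_2$, the curvatures are $k,k,2k$; if $k_1=k_1+k_2$ then $k_2=0$ and the curvatures are $k_1,0,k_1$; and $k_2=k_1+k_2$ gives the symmetric case $0,k_1,k_1$. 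Thus in every case two of the principal curvatures coincide and the third is either $0$ or twice the repeated one.

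Next I would invoke \cite[Theorem 4.1]{ChenIDeal2E4}, which states that a $\delta(2)$-ideal hypersurface in $\mathbb E^4$ is congruent either to a spherical cylinder $\mathbb S^{2}(r^2)\times\mathbb E$ (equivalently the case covered by Proposition \ref{GCRE4Example1}) or to a rotational hypersurface of the form \eqref{RotHypSurf} for suitably chosen profile functions $(f,g)$. The point is that the two-distinct-curvature condition is automatically compatible with both families: the spherical cylinder has principal curvatures $1/r,1/r,0$, matching the pattern $k,k,0$, and the rotational hypersurface \eqref{RotHypSurf} produced by Chen's construction carries exactly the $\delta(2)$-ideal curvature triple. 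So the classification theorem hands me precisely the two model classes, and I need only certify that each is GCR.

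For the spherical cylinder, Proposition \ref{GCRE4Example1} (case (ii)) already asserts it is a GCR hypersurface, so nothing further is required there. For the rotational hypersurface \eqref{RotHypSurf}, Proposition \ref{GCRE4Example3} states directly that every such hypersurface is GCR, independently of the particular functions $f,g$. Hence once Chen's theorem places $M$ in one of these two families, the GCR conclusion follows immediately from the examples collected earlier in this subsection. The proof therefore reduces to a citation of \cite[Theorem 4.1]{ChenIDeal2E4} followed by an appeal to Propositions \ref{GCRE4Example1} and \ref{GCRE4Example3}.

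The only genuine subtlety I anticipate is bookkeeping about the congruence classes: I must make sure that the two-distinct-principal-curvature hypothesis does not exclude one of Chen's two families, and conversely that it does not admit some degenerate isoparametric piece (for instance $\mathbb S^1(r)\times\mathbb E^2$, which has curvatures $1/r,0,0$) that would fall outside the $\delta(2)$-ideal pattern $k_1,k_2,k_1+k_2$. Checking that such degenerate cylinders are indeed ruled out by the ideal condition — since $k_1,k_2,k_1+k_2$ cannot equal $1/r,0,0$ with two distinct nonzero entries — is the main thing to verify carefully; after that, the result is essentially a corollary of the cited classification together with the worked examples, so no lengthy computation is needed.
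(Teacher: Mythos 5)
Your proposal is correct and follows essentially the same route as the paper, which obtains this proposition directly by combining Chen's classification \cite[Theorem 4.1]{ChenIDeal2E4} with Propositions \ref{GCRE4Example1} and \ref{GCRE4Example3}. The additional bookkeeping you do with Lemma \ref{Delta2InvLemma} (enumerating the curvature patterns and ruling out $\mathbb S^1(r)\times\mathbb E^2$) is sound but not needed beyond what the cited results already provide.
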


In the following theorem, we focus on hypersurfaces with three distinct distinct principal curvatures. 
\begin{theorem}\label{POHAHA_SPR}
Let $M$ be  a $\delta(2)$-ideal hypersurface in the Euclidean space  $\mathbb E^4$ with three distinct principal curvatures. Then, $M$ is a GCR hypersurface if and only if it is congruent to a $SO(2)\times SO(2)$-invariant hypersurface given by \eqref{O2O2InvHypSurf}.
\end{theorem}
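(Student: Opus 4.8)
The plan is to prove both directions of the equivalence by exploiting the $\delta(2)$-ideal condition from Lemma \ref{Delta2InvLemma} together with the structural equations \eqref{E4GCRCond1} for GCR hypersurfaces, and then invoke the coordinate-extraction machinery of Lemma \ref{WarpPrdctKeyLemma}. The sufficiency direction is essentially already in hand: Proposition \ref{GCRE4Example2} shows the $SO(2)\times SO(2)$-invariant hypersurface \eqref{O2O2InvHypSurf} is GCR, so I would only need to verify that its three principal curvatures $\kappa, -f'/g, g'/f$ satisfy the $\delta(2)$-ideal relation of Lemma \ref{Delta2InvLemma}, namely that (after suitably labelling) one curvature equals the sum of the other two; this follows from a direct computation using $f'^2+g'^2=1$ and the resulting expression $\kappa = f'g''-g'f''$, together with a short algebraic check that $k_1=k_2+k_3$ holds for the appropriate profile curve.

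For necessity, I would assume $M$ is a $\delta(2)$-ideal GCR hypersurface with three distinct principal curvatures $k_1,k_2,k_3$. By Lemma \ref{Delta2InvLemma} the curvatures in some order are $k_a,k_b,k_a+k_b$, so exactly one of them is the sum of the other two. The first key step is to determine which principal direction is $e_1$ (the direction of $x^T$). Since the three curvatures are distinct, the relation forces a specific pairing, and I would use equations \eqref{E4GCRCond1b}--\eqref{E4GCRCond1c} governing $e_1(k_2)$ and $e_1(k_3)$ together with \eqref{E4GCRCond1a} ($e_2(k_1)=e_3(k_1)=0$) to pin down the algebraic dependencies among the curvatures. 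The goal here is to show that the distribution structure is exactly the one required by Lemma \ref{WarpPrdctKeyLemma}, with $T_1=\mathrm{span}\{e_2\}$ and $T_2=\mathrm{span}\{e_3\}$ (each one-dimensional, so $p=q=1$).

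The second key step is to establish hypotheses (ii) and (iii) of Lemma \ref{WarpPrdctKeyLemma}. Condition (ii), that $\nabla_{e_3}e_2\in\mathrm{span}\{e_2\}$ and $\nabla_{e_2}e_3\in\mathrm{span}\{e_3\}$, should come from \eqref{E4GCRCond1d}--\eqref{E4GCRCond1f} combined with \eqref{E4GCRCond0}; in particular \eqref{E4GCRCond1d} shows $\omega_{23}(e_1)=0$ since the curvatures are distinct. For condition (iii) I would identify $\nabla X = \nabla e_1$ acting as $\nabla_{e_i}e_1 = \frac{1+\mu\cos\theta\, k_i}{\mu\sin\theta}\,e_i$ from \eqref{GCRCond4}, so $\lambda$ and $\nu$ are the coefficients for $i=2,3$, and verify the derivative conditions $e_2(\lambda)=e_3(\nu)=0$ using \eqref{GCRCond3} and the curvature relations. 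Hypothesis (iv), $e_1=\partial_s$, is precisely Remark \ref{Poincare}. Once Lemma \ref{WarpPrdctKeyLemma} applies, the metric takes the warped-product form \eqref{WarpPrdctKeyLemmaResult} with $p=q=1$, and the final step is to integrate this metric structure together with the specific curvature values to recover the explicit parametrization \eqref{O2O2InvHypSurf}, matching against Example \ref{ExO2O2InvHypSurf}.

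The main obstacle I anticipate is the necessity direction's passage from the abstract warped-product metric back to the concrete $SO(2)\times SO(2)$-invariant immersion: Lemma \ref{WarpPrdctKeyLemma} delivers the intrinsic metric, but reconstructing the extrinsic embedding requires solving the Gauss--Weingarten system with the prescribed shape operator and showing the solution is congruent to \eqref{O2O2InvHypSurf} rather than some other hypersurface realizing the same metric. I expect this to hinge on carefully using the $\delta(2)$-ideal relation $k_1=k_2+k_3$ (or its permutation) to constrain the warping functions $a(s),b(s)$ so that the ODE for the profile curve $(f,g)$ with $f'^2+g'^2=1$ integrates to the rotational form, where the two orthogonal circular factors of the symmetry group correspond exactly to the two one-dimensional distributions $T_1$ and $T_2$.
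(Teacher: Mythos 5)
Your overall architecture coincides with the paper's (Lemma \ref{Delta2InvLemma} plus the GCR structural equations, then Lemma \ref{WarpPrdctKeyLemma} with $T_1=\mathrm{span}\{e_2\}$, $T_2=\mathrm{span}\{e_3\}$, then reconstruction of the immersion), but the central step is missing. To verify hypotheses (ii) and (iii) of Lemma \ref{WarpPrdctKeyLemma} you need $\omega_{23}(e_2)=\omega_{23}(e_3)=0$ and $e_2(\lambda)=e_3(\nu)=0$, where $\lambda,\nu$ are the coefficients in \eqref{GCRCond4}; in view of \eqref{GCRCond3}, \eqref{E4GCRCond1e}, \eqref{E4GCRCond1f} and $k_2\neq k_3$, all of this is equivalent to
$e_2(k_2)=e_3(k_2)=e_2(k_3)=e_3(k_3)=0$, i.e.\ to the paper's claim \eqref{E4GCRDelta3Claim1}. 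The equations you cite do \emph{not} deliver this: \eqref{E4GCRCond1e}--\eqref{E4GCRCond1f} merely translate it into the vanishing of $\omega_{23}(e_2),\omega_{23}(e_3)$, \eqref{E4GCRCond0} and \eqref{E4GCRCond1d} only control the $e_1$-components, and \eqref{E4GCRCond1a} concerns $k_1$ alone. This is precisely where the $\delta(2)$-ideal hypothesis must do real work, and your necessity argument never actually uses it. The paper's route is: in the two cases $k_3=k_1+k_2$ and $k_1=k_2+k_3$, substitute the relation into \eqref{E4GCRCond1b}--\eqref{E4GCRCond1c} to express $e_1(k_1)$ in terms of $k_1,k_2$ (respectively $k_1$ and $k_2k_3$), then apply $e_2,e_3$ to that expression and invoke the second-order identity \eqref{E4GCRCond1g}, $e_2e_1(k_1)=e_3e_1(k_1)=0$ (which follows from $\langle[e_1,e_i],e_1\rangle=0$, itself a consequence of \eqref{GCRCond1}), together with \eqref{GCRCond3} and \eqref{E4GCRCond1a}. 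Without this differentiation trick, or a substitute for it, the appeal to Lemma \ref{WarpPrdctKeyLemma} is unjustified and the proof does not go through.

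Three smaller points. First, there is no need to ``determine which principal direction is $e_1$'': the relation of Lemma \ref{Delta2InvLemma} can involve $k_1$ in two inequivalent ways ($k_3=k_1+k_2$ or $k_1=k_2+k_3$, up to swapping $e_2,e_3$), and both cases occur a priori and must both be handled. Second, your sufficiency step is partly misdirected: the $\delta(2)$-ideal condition is a standing hypothesis of the theorem, not part of the equivalence, so sufficiency is exactly Proposition \ref{GCRE4Example2} and nothing more; indeed a generic hypersurface \eqref{O2O2InvHypSurf} is \emph{not} $\delta(2)$-ideal, so the ``short algebraic check'' you propose would fail except for special profile curves --- fortunately it is not needed. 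Third, in the final reconstruction the integral curves of $\partial_t$ and $\partial_u$ are circles or straight lines according to whether $k_2,k_3$ vanish; the mixed case leads to the hypercylinder \eqref{E4HypoverRot}, which must be ruled out by Proposition \ref{GCRE4Example1} (such hypersurfaces have only two distinct principal curvatures). Your proposal omits this case analysis.
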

\begin{proof}
Let $M$ be a $\delta(2)$-ideal GCR hypersurface in  $\mathbb E^4$, $e_1,e_2,e_3$ principal directions with corresponding principal curvatures $k_1,k_2,k_3$ and $\omega_1,\omega_2,\omega_3$  1-forms such that $\omega_i(e_j)=\delta_{ij}$, where $e_1$ is proportional to the tangential component $x^T$ of the position vector $x$ of $M$. 

Since $M$ is a $\delta(2)$-ideal, Lemma \ref{Delta2InvLemma} implies either $k_3=k_1+k_2$ or $k_1=k_2+k_3$. First, we want to show 
\begin{equation}\label{E4GCRDelta3Claim1}
e_2(k_2)=e_3(k_2)=e_2(k_3)=e_3(k_3)=0.
\end{equation}

\textit {Case 1.} $k_3=k_1+k_2$. In this case, since $M$ has three distinct principal curvatures, we may assume that the functions $k_1$, $k_2$ and $k_1-k_2$ does not vanish. Since $k_3=k_1+k_2$, \eqref{E4GCRCond1c}  becomes
\begin{equation}
\label{E4GCRCond1c2} e_1(k_1+k_2)=\frac{1+\mu\cos\theta (k_1+k_2)}{\mu\sin\theta}(-k_2).
\end{equation}
By combining \eqref{E4GCRCond1b} and \eqref{E4GCRCond1c2} we obtain 
\begin{equation}
\label{E4GCRDelta3Cond1} e_1(k_1)=\frac{-k_1-2k_1k_2\mu\cos\theta }{\mu\sin\theta}
\end{equation}
Since $k_1\neq0$, applying $e_2$ and $e_3$ to \eqref{E4GCRDelta3Cond1} and using  \eqref{GCRCond3}, \eqref{E4GCRCond1a}, \eqref{E4GCRCond1g} we get \eqref{E4GCRDelta3Claim1}.

\textit {Case 2.} $k_1=k_2+k_3$. In this case, from \eqref{E4GCRCond1b} and \eqref{E4GCRCond1c} we have
\begin{equation}\label{E4GCRDelta3Case2Eq1}
e_1(k_1)=e_1(k_2+k_3)=\frac{k_1+2\mu\cos\theta k_2k_3}{\mu\sin\theta}.
\end{equation}
Similar to the previos case, by applying $e_i$  to \eqref{E4GCRDelta3Case2Eq1}, we obtain 
\begin{equation}\label{E4GCRDelta3Case2Eq2}
e_i(k_2k_3)=0,\quad i=2,3.
\end{equation}
In addition, because of \eqref{E4GCRCond1a} and $k_1=k_2+k_3$, we have 
\begin{equation}\label{E4GCRDelta3Case2Eq3}
e_i(k_2+k_3)=0,\quad i=2,3.
\end{equation}
Therefore, since $M$ has three distinct principal curvatures, \eqref{E4GCRDelta3Case2Eq2} and \eqref{E4GCRDelta3Case2Eq3} imply \eqref{E4GCRDelta3Claim1}.

Hence, we have proved \eqref{E4GCRDelta3Claim1} in both cases. Therefore, \eqref{GCRCond2} and \eqref{E4GCRCond1d}-\eqref{E4GCRCond1f} imply
\begin{subequations}\label{E4GCRDelta3Claim2All}
\begin{eqnarray}
\label{E4GCRDelta3Claim2a}\omega_{23}=0,&&\\
\label{E4GCRDelta3Claim2b}\omega_{12}=\omega_{12}(e_2)\omega_2,&\quad& \omega_{13}=\omega_{13}(e_3)\omega_3.
\end{eqnarray}
In addition, from \eqref{GCRCond4} we have
\begin{equation}\label{E4GCRDelta3Claim2c}
e_i(\omega_{1j}(e_j))=0,\quad i=2,3.
\end{equation}
\end{subequations}

Next, we put $X=e_1$ and define two distributions $T_1=\mathrm{span}\{e_2\}$ and $T_2=\mathrm{span}\{e_3\}$. By using \eqref{E4GCRDelta3Claim2All}, one can check that $X,T_1,T_2$  satisfy hypothesis of Lemma \ref{WarpPrdctKeyLemma} from which we see that there exists a local coordinate system $(s,t,u)$ such that $e_2$ and  $e_3 $ are proportional to $\partial_t$ and $\partial_u$, respectively. Furthermore, the metric tensor is $g=ds^2+a(s)A(t)dt^2+b(s)B(u)du^2$ for some smooth functions $a,b,A,B$  and 
\begin{equation}\label{E4GCRDelta3FindingxEq0}
\nabla_{\partial_u}\partial_t=\nabla_{\partial_t}\partial_u=0.
\end{equation}
By re-defining $t$ and $u$, we may assume $A(t)=B(u)=1$.  Hence, $g$ becomes
\begin{equation}\label{E4GCRDelta3FindingxEq2}
g=ds^2+a(s)dt^2+b(s)du^2.
\end{equation}

On the other hand, \eqref{E4GCRDelta3FindingxEq0}  implies $\widetilde\nabla_{\partial_u}\partial_t=\widetilde\nabla_{\partial_t}\partial_u=0$. Therefore, the position vector $x$ has the form
\begin{equation}\label{E4GCRDelta3FindingxEq1}
x(s,t,u)= x_1(s,t)+x_2(s,u).
\end{equation}
By a further computation using \eqref{E4GCRDelta3FindingxEq2}, we obtain 
$\widetilde\nabla_{\partial_s}\partial_t=a'/a\partial_t.$
By combining this equation with \eqref{E4GCRDelta3FindingxEq1} we obtain $\partial_s\partial_t(x_1)=\partial_t(x_1)$ which implies $x_1(s,t)=f(s)\theta_1(t)+ \alpha_1(s)$ for a smooth function $f$ and some vector values function $\theta_1,\alpha_1$. By a similar way, we get also $x_2(s,u)=g(s)\theta_2(u)+ \alpha_2(s)$. Thus, \eqref{E4GCRDelta3FindingxEq1} implies 
\begin{equation}\label{E4GCRDelta3FindingxEq3}
x(s,t,u)= \Gamma(s)+f(s)\theta_1(t)+g(s)\theta_2(u),
\end{equation}
where $\Gamma=\alpha_1+\alpha_2$. Now, let $p=x(s_0,t_0,u_0)\in M$. Note that the slice $s=s_0, u=u_0$ (resp. $s=s_0, t=t_0$) is the integral curve of $\partial_t$ (resp. $\partial_u$) passing through $p$. Since $\nabla_{e_i}e_i=0,\ i=2,3$ because of \eqref{E4GCRDelta3Claim2a}, \eqref{E4GCRDelta3FindingxEq2} gives $\widetilde\nabla_{\partial_t}\partial_t=k_2(s)\partial_t$ and $\widetilde\nabla_{\partial_u}\partial_u=k_3(s)\partial_u$. Recall that $k_2$ and $k_3$ are constant along any integral curve of $\partial_t$ and $\partial_u$. Therefore, they must be circles or lines subject to $k_i=0$ or $k_i\neq0$.Thus, we have two cases because $M$ has three distinct principal curvatures.

\textit {Case A.} $k_2,k_3\neq 0$. In this case, integral curves of $\partial_t$ and $\partial_u$ are circles. Thus, a further computation yields that $M$ is congruent to \eqref{O2O2InvHypSurf}.

\textit {Case B.} $k_2\neq 0$, $k_3=0$. In this case, the integral curves of $\partial_t$ are circles while the integral curves of $\partial_u$ are lines. Thus, by a further computation one can obtain that $M$ is congruent to \eqref{E4HypoverRot}. However, Proposition \ref{GCRE4Example1} implies that this case is not possible because $M$ has three distinct principal curvatures.

Hence, the proof is completed.
\end{proof}
\begin{Remark}
See also  \cite{HsanisField,TurgayHHypersurface} for a classification of hypersurfaces satisfying \eqref{E4GCRDelta3Claim2All} under the restriction of being biconservative.
\end{Remark}

For the existence of $\delta(2)$-ideal GCR hypersurfaces, we state the following corollary of Theorem \ref{POHAHA_SPR}.
\begin{Corol}
Let $M$ be a hypersurface in the Euclidean space  $\mathbb E^4$ with three distinct curvature. Then, $M$ is a 1-minimal, $\delta(2)$-ideal GCR hypersurface if and only if it is congruent to either an open part of a hyperplane or a hypersurface given by
\begin{equation}\label{LBLHyp3min1min}
x(s,t,u)=({\sqrt2}s\cos t,{\sqrt2}s\sin t, {\sqrt2}s\cos u,{\sqrt2}s\sin u).
\end{equation}
\end{Corol}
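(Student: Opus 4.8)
The plan is to read off the result from Theorem~\ref{POHAHA_SPR} and then impose the two arithmetic constraints coming from $1$-minimality and $\delta(2)$-ideality on the explicit principal curvatures of an $SO(2)\times SO(2)$-invariant hypersurface.

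For the necessity, suppose $M$ has three distinct principal curvatures and is a $1$-minimal, $\delta(2)$-ideal GCR hypersurface. By Theorem~\ref{POHAHA_SPR}, $M$ is congruent to a hypersurface of the form \eqref{O2O2InvHypSurf}, so by Example~\ref{ExO2O2InvHypSurf} its principal curvatures are $k_1=\kappa$, $k_2=-f'/g$ and $k_3=g'/f$, where $f'^2+g'^2=1$ and $\kappa$ is the curvature of the profile curve $\alpha=(f,g)$. I would then translate the two hypotheses into conditions on $f,g$: $1$-minimality means $k_1+k_2+k_3=0$, while $\delta(2)$-ideality, via Lemma~\ref{Delta2InvLemma}, means the triple has the shape $\{p,q,p+q\}$. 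Imposing $p+q+(p+q)=0$ forces $p+q=0$, so the principal curvatures are $\{0,a,-a\}$ with $a\neq0$ (the inequality being exactly the three-distinct hypothesis); hence one principal curvature vanishes and the other two are opposite.

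Next I would locate the vanishing curvature. If $k_2=-f'/g=0$ then $f'\equiv0$, so $\alpha$ is a straight line and $\kappa=0$ as well, producing the repeated value $0$ and contradicting distinctness; the same contradiction follows from $k_3=g'/f=0$. Therefore the vanishing curvature must be $\kappa$, so $\alpha$ is a straight line, and $k_2=-k_3$ gives $ff'=gg'$, i.e. $(f^2-g^2)'=0$. Integrating $\kappa=0$ with $f'^2+g'^2=1$ gives $f=c_1s+c_2$, $g=c_3s+c_4$ with $c_1^2+c_3^2=1$, and $f^2-g^2=\text{const}$ forces $c_1^2=c_3^2=\tfrac12$ together with $c_1c_2=c_3c_4$. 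After a translation in $s$ and, if needed, the reparametrization $u\mapsto u+\pi$ that exchanges the sign of $g$, this collapses to $f=g$ linear, which is precisely the model \eqref{LBLHyp3min1min}; the degenerate alternative $a=0$ (all curvatures zero) is an open part of a hyperplane. The converse is a direct verification: for \eqref{LBLHyp3min1min} one computes the principal curvatures $0,\pm 1/(2s)$, which are distinct, sum to zero and are of $\delta(2)$-ideal type, while the hyperplane trivially satisfies all three properties.

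The genuinely delicate step is the last one: checking that the four integration constants $c_1,\dots,c_4$ can all be absorbed by a rigid motion and a reparametrization, so that the a priori larger family of admissible lines reduces to the single hypersurface \eqref{LBLHyp3min1min}. Everything preceding it is essentially the structural reduction of Theorem~\ref{POHAHA_SPR} followed by elementary algebra on the three principal curvatures.
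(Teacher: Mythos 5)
Your proof is correct and follows exactly the route the paper intends: the paper states this result as a corollary of Theorem~\ref{POHAHA_SPR} without writing out the details, and your argument (reduce to the $SO(2)\times SO(2)$-invariant model via that theorem, then force the curvature triple $\{p,-p,0\}$ with $p=\kappa=0$ excluded by distinctness, and absorb the integration constants of the linear profile curve by a translation in $s$ and the shift $u\mapsto u+\pi$) is precisely the omitted verification, matching the paper's own remark that \eqref{LBLHyp3min1min} has shape operator $\mathrm{diag}\ (0,k_2,-k_2)$.
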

\begin{Remark}
The hypersurface given by \eqref{LBLHyp3min1min} has the shape operator $S=\diag(0,k_2,-k_2)$. Thus, it is a 1-minimal and 3-minimal hypersurface with non-constant second mean curvature. Furthermore, this  $SO(2)\times SO(2)$ invariant hypersurface is also a member of family of hypersurfaces given in Example \ref{ProgTangHypsurf}, because it can be obtain by putting $y(v,w)=({\sqrt2}\cos v,{\sqrt2}\sin v,$ $ {\sqrt2}\cos w,{\sqrt2}\sin w)$ and $C=0$ in \eqref{ProgTangHypsurfPosVect}.
\end{Remark}
\subsection{Hypersurfaces with constant mean curvature}
In this subsection, we consider GCR hypersurfaces with a constant mean curvature.

First, we focus on hypersurface whose first mean curvature is constant and obtain the following classification.
\begin{theorem}
Let $M$ be a hypersurface in $\mathbb E^4$ with  constant first mean curvature. Then, $M$ is a GCR hypersurface if and only if it is congruent to one of the following 3 types of hypersurfaces.
\begin{enumerate}
\item [(i)] An isoparametric hypersurface,
\item [(ii)] A rotational hypersurface given by \eqref{RotHypSurf},
\item [(iii)] An  $SO(2)\times SO(2)$-invariant hypersurface  given by \eqref{O2O2InvHypSurf}.
\end{enumerate}

\end{theorem}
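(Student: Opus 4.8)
The plan is to set up the principal-curvature framework from Section 3, use the constant-mean-curvature hypothesis together with the GCR Codazzi identities \eqref{E4GCRCond1} to force enough of the derivatives $e_i(k_j)$ to vanish, and then split into cases according to how many distinct principal curvatures $M$ has. As usual I let $e_1,e_2,e_3$ be the principal directions with $e_1$ proportional to $x^T$, and I write $k_1,k_2,k_3$ for the principal curvatures. The hypothesis is $s_1=k_1+k_2+k_3=3H_1=\mathrm{const}$. The first reduction is to handle the multiplicity cases. If all three curvatures coincide then $M$ is totally umbilical and hence an open part of a sphere or a hyperplane, which is isoparametric (case (i)). If exactly two coincide, say $k_2=k_3$, then $M$ has two distinct principal curvatures and one expects to land in the rotational family \eqref{RotHypSurf} or an isoparametric hypersurface. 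So the substantive work is the generic case of three distinct principal curvatures.

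\emph{Three distinct principal curvatures.} Here I would first differentiate the constancy relation $k_1+k_2+k_3=\mathrm{const}$ along $e_1,e_2,e_3$. Applying $e_2$ and $e_3$ and using \eqref{E4GCRCond1a} (which gives $e_2(k_1)=e_3(k_1)=0$) yields $e_i(k_2)+e_i(k_3)=0$ for $i=2,3$. Applying $e_1$ and feeding in the explicit expressions \eqref{E4GCRCond1b} and \eqref{E4GCRCond1c} for $e_1(k_2)$ and $e_1(k_3)$ gives an algebraic relation among $k_1,k_2,k_3,\mu,\theta$. The goal of this step is to show, exactly as in the proof of Theorem \ref{POHAHA_SPR}, that
\begin{equation*}
e_2(k_2)=e_3(k_2)=e_2(k_3)=e_3(k_3)=0.
\end{equation*}
Once this is established, equations \eqref{E4GCRCond1d}, \eqref{E4GCRCond1e} and \eqref{E4GCRCond1f} (using $k_2\neq k_3$) force $\omega_{23}(e_1)=\omega_{23}(e_2)=\omega_{23}(e_3)=0$, so $\omega_{23}=0$, and \eqref{GCRCond4} pins down the remaining connection forms in the form $\omega_{12}=\omega_{12}(e_2)\omega_2$, $\omega_{13}=\omega_{13}(e_3)\omega_3$. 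In other words I would recover precisely the connection structure \eqref{E4GCRDelta3Claim2All} that drove the earlier classification.

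\emph{Invoking the key lemma.} With $\omega_{23}=0$ and the other connection forms diagonalized, I set $X=e_1$, $T_1=\mathrm{span}\{e_2\}$, $T_2=\mathrm{span}\{e_3\}$ and verify the hypotheses of Lemma \ref{WarpPrdctKeyLemma} (condition (iv) comes from Remark \ref{Poincare}, which gives $e_1=\partial_s$). The lemma supplies a coordinate system $(s,t,u)$ with $\nabla_{\partial_u}\partial_t=\nabla_{\partial_t}\partial_u=0$ and a metric of the warped form $g=ds^2+a(s)dt^2+b(s)du^2$. Exactly as in Theorem \ref{POHAHA_SPR}, the vanishing mixed covariant derivatives promote to $\widetilde\nabla_{\partial_u}\partial_t=\widetilde\nabla_{\partial_t}\partial_u=0$ in $\mathbb E^4$, so the position vector decouples as $x(s,t,u)=\Gamma(s)+f(s)\theta_1(t)+g(s)\theta_2(u)$, and the integral curves of $\partial_t$ and $\partial_u$ must be circles or lines according as $k_2,k_3$ vanish or not. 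Since the curvatures are distinct, the subcase with one of them zero reduces via Proposition \ref{GCRE4Example1} to a hypercylinder over a rotational surface, which cannot have three distinct principal curvatures; the remaining subcase $k_2,k_3\neq0$ forces $M$ to be the $SO(2)\times SO(2)$-invariant hypersurface \eqref{O2O2InvHypSurf}, giving case (iii). The converse direction is routine: Propositions \ref{GCRE4Example2} and \ref{GCRE4Example3} already show that the hypersurfaces in (ii) and (iii) are GCR, and isoparametric hypersurfaces of $\mathbb E^4$ are products $\mathbb S^{n-p}\times\mathbb E^p$ whose GCR character is checked directly, so it only remains to verify that each listed type can carry constant $H_1$.

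\emph{The main obstacle.} The delicate point is the two-distinct-curvatures case $k_2=k_3=:\lambda\neq k_1$. Here the constancy $k_1+2\lambda=\mathrm{const}$ together with the GCR relations \eqref{E4GCRCond1a}--\eqref{E4GCRCond1c} must be shown to force $M$ into the rotational family \eqref{RotHypSurf} (case (ii)) or into an isoparametric hypersurface (case (i)); unlike the three-distinct case, the distribution $\mathrm{span}\{e_2,e_3\}$ is now an eigendistribution of the shape operator and I cannot simply split it, so I would instead analyze the umbilic leaf $\{e_2,e_3\}$ directly, show its integral submanifolds are totally umbilical (round $2$-spheres or planes), and reconstruct $M$ as a rotational hypersurface whose profile encodes $k_1$. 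Guaranteeing that \emph{no} exotic constant-$H_1$ solutions escape this trichotomy—in particular ruling out profiles that are neither isoparametric nor of the form \eqref{RotHypSurf}—is where the genuine ODE analysis lives, and it is the step I expect to require the most care.
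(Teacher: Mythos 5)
Your overall architecture---splitting by the number of distinct principal curvatures, and in the three-distinct case differentiating $k_1+k_2+k_3=c$ along $e_1,e_2,e_3$, using \eqref{E4GCRCond1a}--\eqref{E4GCRCond1g} to force $e_i(k_j)=0$ for $i,j\in\{2,3\}$, recovering the connection structure \eqref{E4GCRDelta3Claim2All}, and re-running the warped-product machinery of Theorem \ref{POHAHA_SPR} to land on \eqref{O2O2InvHypSurf}---is exactly the paper's treatment of its Case III, and that portion of your sketch is sound. (One caveat: the vanishing of $e_i(k_2)$, $e_i(k_3)$ is not obtained ``exactly as in'' Theorem \ref{POHAHA_SPR}, whose corresponding claim used the $\delta(2)$-ideal relation $k_3=k_1+k_2$ or $k_1=k_2+k_3$; under the CMC hypothesis the paper instead derives $e_i(k_2)(2k_2+k_1-c)=0$ and runs an open-set contradiction argument. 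The mechanism is parallel but the computation is specific to CMC and you would need to carry it out.)

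The genuine gap is the two-distinct-curvature case, which you explicitly leave open as ``the main obstacle'' requiring ``genuine ODE analysis.'' Two things go wrong. First, you omit the subcase $k_1=k_2\neq k_3$ entirely: since $e_1$ is the distinguished direction $x^T/\|x^T\|$, you cannot assume $k_2=k_3$ without loss of generality, and this subcase has a \emph{different} conclusion---Codazzi gives $e_1(k_1)=0$ (a principal curvature of multiplicity two is constant along its eigendistribution), which combined with \eqref{E4GCRCond1a} makes $k_1$ constant, hence $k_3$ constant by \eqref{E4H1ConstEq1}, so $M$ is isoparametric, i.e., case (i), not rotational. Second, for $k_2=k_3\neq k_1$ no ODE analysis or leaf-by-leaf reconstruction is needed; your plan to analyze the umbilic leaves and rebuild the rotation amounts to reproving a known classification. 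The paper closes this case in three lines: Codazzi gives $e_2(k_2)=e_3(k_2)=0$, so together with \eqref{E4GCRCond1a} and the coordinate $s$ of Remark \ref{Poincare} all principal curvatures are functions of $s$ alone; the inverse function theorem then gives locally $k_1=\lambda(k_2)$, and the theorem of do Carmo and Dajczer \cite[Theorem 4.2]{doCarmoRotHyper} (a hypersurface with a principal curvature of multiplicity two functionally dependent on the simple one is a rotation hypersurface) immediately yields \eqref{RotHypSurf}, i.e., case (ii). Without these two steps your trichotomy is not closed, so as written the proof is incomplete precisely where you flagged it.
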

\begin{proof}
Let $M$ be a GCR hypersurface in $\mathbb E^4$ with principal curvatures $k_1,k_2,k_3$ with corresponding principal directions $e_1,e_2,e_3$, where $ e_1=x^T/\|x^T\|.$ Also assume that $M$ has constant first mean curvature, i.e.,
\begin{equation}\label{E4H1ConstEq1}
k_1+k_2+k_3=c
\end{equation}
for a constant $c$. We have three cases subject to number of distinct principal curvatures of $M$.

\textit{Case I.} $M$ has only one distinct principal curvature. In this case, $M$ is either a hypersphere or hyperplane which gives the case (i) of the theorem.

\textit{Case II. } $M$ has two distinct principal curvatures. In this case, we have either $k_1=k_2$ or $k_2=k_3$.

\textit{Case IIa. }$k_1=k_2$. In this case, an instance of Codazzi equation \eqref{MinkCodazzi} yields $e_1(k_1)=0$. However, this equation and \eqref{E4GCRCond1a} implies that $k_1$ is a constant. However, \eqref{E4H1ConstEq1} implies $k_3$ is also constant. Therefore, $M$ is an isoparametric hypersurface. Thus,  we have the case (ii) of the theorem.

\textit{Case IIb.} $k_2=k_3$. In this case, an instance of Codazzi equation \eqref{MinkCodazzi} yields $e_2(k_2)=e_2(k_3)=0$. Thus, we have $k_i=k_i(s),\ i=1,2,3$, where $(s,\hat t,\hat u)$ is  a local coordinate system such that $s$ is choosen as described in Remark \ref{Poincare}. Hence, by the inverse function theorem, we, locally, have $k_2=k_3$ and $k_1=\lambda(k_2)$. Therefore, the well-known result of do Carmo and Dajczer implies that $M$ is a rotational hypersurface (see \cite[Theorem 4.2]{doCarmoRotHyper}). Hence, we have the case (ii) of the theorem.

\textit{Case III.} $M$ has three distinct principal curvatures. In this case, by combining \eqref{E4GCRCond1c} with \eqref{E4H1ConstEq1}, we obtain
\begin{equation}\nonumber
e_1(-k_1-k_2+c)=\frac{1+\mu\cos\theta k_3}{\mu\sin\theta}(2k_1+k_2-c).
\end{equation}
By combining this equation and \eqref{E4GCRCond1c}, we get
$$\frac{-\mu\sin\theta e_1(k_1)-3k_1+c}{\mu\cos\theta}+c^2-3ck_1+2k_1^2=-2k_1k_2-2k_2^2+2ck_2.$$
By applying $e_i$ to this equation and combining the equation obtained with \eqref{GCRCond3}, \eqref{E4GCRCond1a} and \eqref{E4GCRCond1g}, we obtain
\begin{equation}\nonumber
e_i(k_2)(2k_2+k_1-c)=0.
\end{equation}
for $i=2,3$. Thus, on the open subset $\mathcal U_i=\{p|e_i(k_2)(p)\neq0\}$ we have $2k_2+k_1-c=0$. By applying $e_i$ to this equation and considering \eqref{E4GCRCond1a}, we obtain $e_i(k_2)=0$ on  $\mathcal U_i$ which yields a contradiction unless $\mathcal U_i=\varnothing$. Hence, we have $e_i(k_2)=0$ on $M$. Moreover,  \eqref{E4GCRCond1a} and \eqref{E4H1ConstEq1} imply $e_i(k_3)=0$. 

We have proved that corresponding connection forms of $M$ satisfy \eqref{E4GCRDelta3Claim2All}. Hence, $M$ is an open part of $SO(2)\times SO(2)$-invariant hypersurface (See the proof of Theorem \ref{POHAHA_SPR}) which gives the case (iii) of the theorem.

This proves the necessary part of the proposition and the sufficient part follows from a direct computation. 
\end{proof}

Next, we consider 3-minimal hypersurfaces, i.e., hypersurfaces with the Gauss-Kronecker curvature vanishing identically. We obtain the following classification theorem.

\begin{theorem}
Let $M$ be a 3-minimal hypersurface in the Euclidean space $E^4$. Then, $M$ is a GCR hypersurface if and only if it is congruent to one of the following 3 types of hypersurfaces.
\begin{enumerate}
\item [(i)] One of hypercylinders given in Proposition \ref{GCRE4Example1},

\item [(ii)] A hypersurface given in Example \ref{ProgTangHypsurf} for a regular surface $y$ lying on $\mathbb S^3(1)$,

\item [(iii)] A hypersurface given in Example \ref{ProgTangHypsurfType2} for a regular curve $\alpha$ lying on $\mathbb S^3(1)$.
\end{enumerate}
\end{theorem}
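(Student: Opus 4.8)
The plan is to prove the classification of 3-minimal GCR hypersurfaces by exploiting the condition $H_3 = 0$, which means the Gauss-Kronecker curvature $k_1 k_2 k_3 = 0$ vanishes identically, together with the structure equations \eqref{GCRCondall12}--\eqref{GCRCondall34} and \eqref{E4GCRCond1} already derived for GCR hypersurfaces in $\mathbb E^4$. Since at least one principal curvature vanishes everywhere, I would first argue (by connectedness and a continuity/open-dense-set argument) that we may reduce to a finite list of subcases according to which principal curvature is identically zero and how many distinct principal curvatures $M$ has. The recurring philosophy mirrors the proof of Theorem \ref{POHAHA_SPR}: show the connection forms collapse to the form \eqref{E4GCRDelta3Claim2All}, apply Lemma \ref{WarpPrdctKeyLemma} to obtain an adapted coordinate system and a product-type metric, and then integrate the position vector explicitly.

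First I would dispose of the degenerate cases. If $M$ has at most two distinct principal curvatures, then $H_3=0$ forces one of them to be zero, and the Codazzi-type relations \eqref{E4GCRCond1a}--\eqref{E4GCRCond1f} force the remaining curvature functions to depend only on $s$ (the coordinate from Remark \ref{Poincare}); this reduction should land us in the hypercylinder family of Proposition \ref{GCRE4Example1}, giving case (i). The substantive work is the case of three distinct principal curvatures, where exactly one of $k_1,k_2,k_3$ is identically zero. I would split according to whether the vanishing curvature is $k_1$ (the one in the $x^T$-direction $e_1$) or one of $k_2,k_3$.

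The key mechanism in each subcase is to show that $e_i(k_j)=0$ for $i=2,3$, so that all principal curvatures are functions of $s$ alone; then \eqref{E4GCRCond1e}--\eqref{E4GCRCond1f} and \eqref{GCRCond2} give $\omega_{23}=0$ and \eqref{E4GCRDelta3Claim2b}, putting us exactly in position to invoke Lemma \ref{WarpPrdctKeyLemma} with $X=e_1$, $T_1=\mathrm{span}\{e_2\}$, $T_2=\mathrm{span}\{e_3\}$. This yields the coordinates $(s,t,u)$, the split metric $g=ds^2+a(s)dt^2+b(s)du^2$, and the additive decomposition $x(s,t,u)=\Gamma(s)+f(s)\theta_1(t)+g(s)\theta_2(u)$ for the position vector, just as in \eqref{E4GCRDelta3FindingxEq3}. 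When $k_1=0$, equation \eqref{GCRCond2} pins down $\theta$ along $e_1$ and the integral curve of $e_1$ becomes a geodesic line (by Proposition \ref{PropConnBicandGCR} with $k_1|_\alpha=0$); writing $x=s\partial_s + (\text{normal part})$ should reproduce the tangent-developable-type construction of Example \ref{ProgTangHypsurf} or Example \ref{ProgTangHypsurfType2}, depending on whether the $e_2$-curves are circles or the second-order data force a single normal rotation. When instead $k_2$ or $k_3$ vanishes while $k_1\neq0$, the integral curves of the corresponding $\partial_t$ or $\partial_u$ are straight lines (since $\widetilde\nabla_{\partial_u}\partial_u = k_3(s)\partial_u = 0$), which forces a ruling and drives $M$ into the hypercylinder case, matching (i) again.

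The main obstacle I anticipate is the three-distinct-curvature analysis when $k_1=0$: here \eqref{E4GCRCond1b}--\eqref{E4GCRCond1c} do not immediately yield $e_i(k_2)=e_i(k_3)=0$, because the algebraic trick used in Theorem \ref{POHAHA_SPR} (differentiating the $e_1(k_1)$ relation) degenerates when $k_1\equiv0$. I expect to need a more careful use of \eqref{E4GCRCond1a}, \eqref{E4GCRCond1g} and the Gauss equation \eqref{GaussEq} to rule out $s$-dependence leaking into the $t,u$ directions, and then to distinguish Example \ref{ProgTangHypsurf} from Example \ref{ProgTangHypsurfType2} by checking whether $\partial_v$ (i.e.\ $e_2$) integrates to a great circle of fixed radius on a normal sphere or to the surface $y$ sweeping out a two-parameter family; the cleanest route is to compare $\widetilde\nabla_{e_2}e_2$ with the explicit normals $N=n$ versus $N=\cos(v/c)A+\sin(v/c)B$ given in those examples. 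As always, the sufficiency direction is a direct verification that each listed hypersurface satisfies $H_3=0$ and is GCR, which follows from the computations already recorded in Proposition \ref{GCRE4Example1}, Example \ref{ProgTangHypsurf}, and Example \ref{ProgTangHypsurfType2}.
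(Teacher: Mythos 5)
There is a genuine gap, and it sits exactly where you flagged your ``main obstacle.'' Your central mechanism --- proving $e_i(k_j)=0$ for $i=2,3$ so that all principal curvatures are functions of $s$ alone, deducing $\omega_{23}=0$ as in \eqref{E4GCRDelta3Claim2All}, and then invoking Lemma \ref{WarpPrdctKeyLemma} to get a warped-product metric and the additive form of the position vector --- cannot succeed, because the statement you are trying to prove is false for the largest family in the classification. For a hypersurface of case (ii), $x(s,v,w)=sy(v,w)+cn(v,w)$ with $y$ an \emph{arbitrary} regular surface in $\mathbb S^3(1)$, the curvatures $k_2,k_3$ are governed by the shape operator of $y$ and genuinely depend on $(v,w)$; for generic $y$ the principal frame of $M$ also has $\omega_{23}\neq 0$. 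So no amount of ``more careful use of \eqref{E4GCRCond1a}, \eqref{E4GCRCond1g} and the Gauss equation'' will rule out that dependence: the warped-product route can only recover the symmetric members of the family (e.g.\ the $SO(2)\times SO(2)$-invariant ones) and would miss case (ii) almost entirely. Relatedly, your subcase ``$k_2$ or $k_3$ vanishes while $k_1\neq 0$'' does not lead to hypercylinders --- it is vacuous: if $k_2\equiv 0$ on a neighborhood then $e_1(k_2)=0$, and \eqref{E4GCRCond1b} reads $0=k_1/(\mu\sin\theta)$, forcing $k_1=0$ there, a contradiction.

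That contradiction is in fact the cornerstone you are missing: the first step of the actual proof is to show $k_1\equiv 0$ on all of $M$ (at a point with $k_1\neq 0$, 3-minimality forces $k_2$ or $k_3$ to vanish on a neighborhood, and \eqref{E4GCRCond1b} then kills $k_1$). Once $k_1\equiv 0$, the GCR structure equations integrate directly: \eqref{GCRCond1} gives $\widetilde\nabla_{\partial_s}\partial_s=\widetilde\nabla_{\partial_s}N=0$, while \eqref{GCRCond2} and \eqref{DexompofxRes2} give $\mu\cos\theta=c$ (constant) and, after translating $s$, $\mu\sin\theta=s$, so that \eqref{Dexompofx} becomes the explicit ruled form $x(s,v,w)=s\,e_1(v,w)+c\,N(v,w)$ --- no coordinate lemma or curvature-constancy statement is needed. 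The trichotomy that then separates cases (i), (ii), (iii) is not the number of distinct principal curvatures but whether $\nabla_{e_2}e_1$ and $\nabla_{e_3}e_1$ vanish (equivalently, by \eqref{GCRCond4}, whether $1+ck_i=0$): both nonzero makes $e_1(v,w)$ a regular surface $y\subset\mathbb S^3(1)$ with spherical normal $N$, giving Example \ref{ProgTangHypsurf}; both zero gives $k_2=k_3=-1/c$ and a spherical hypercylinder; the mixed case produces Example \ref{ProgTangHypsurfType2} (or a hypercylinder when the curve degenerates). You gestured at the form $x=s\partial_s+(\text{normal part})$, but in your plan it appears downstream of the warped-product reduction, which is precisely the step that fails; in the correct argument it comes first and does all the work.
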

\begin{proof}
Let $M$ be a  3-minimal GCR hypersurface and $e_1=x^T/\|x^T\|,e_2,e_3$ its principal directions with corresponding principal curvatures $k_1,k_2,k_3$. We consider  a local, orthogonal coordinate system $(s,v,w)$, where $s$ is the coordinate function described in Remark \ref{Poincare}, i.e., $e_1=\partial_s$.

First, we want to show that  $k_1$ vanishes identically on $M$. Assume, towards contradiction, that $k_1(m)\neq 0$ at a point $m\in M$. In this case, there exists a neighborhood $\mathcal N_m$ of $m$ on which $k_1$ does not vanish. Then, since the Gauss-Kronecker curvature $k_1k_2k_3=0$, by shrinking if necessary, we may assume $k_2=0$  on $\mathcal N_m$. Note that we also have  $e_1(k_2)=0$ on $\mathcal N_m$. However, if we use these two equations in \eqref{E4GCRCond1b}, we  obtain $k_1=0$ on $\mathcal N_m$ which is a contadiction. Thus, we have $k_1=0$ on $M$.

Since $k_1=0$, from  \eqref{GCRCond1} we obtain 
\begin{equation}\label{GaussKronCurvEq1}
\widetilde\nabla_{\partial_s}\partial_s=0,\quad \widetilde\nabla_{\partial_s}N=0
\end{equation}
and \eqref{GCRCond2} implies $e_1(\theta)=\cos\theta/\mu$. Thus, from \eqref{DexompofxRes2} for $X=e_1$  we have
\begin{equation}\label{GaussKronCurvEq1b}
\mu\cos\theta=c
\end{equation}
for a constant $c$. On the other hand, by applying $e_1=\partial_s$ to the equation $\langle \partial_s,x\rangle=\mu\sin\theta$ and using  \eqref{GaussKronCurvEq1}, we obtain $e_1(\mu\sin\theta)=1$. Therefore, by translating $s$ if necessary, we may assume
\begin{equation}\label{GaussKronCurvEq1c}
\mu\sin\theta=s.
\end{equation}
Hence, by combining \eqref{GaussKronCurvEq1}, \eqref{GaussKronCurvEq1b}  and \eqref{GaussKronCurvEq1c} with \eqref{Dexompofx}, we get  
\begin{equation}\label{GaussKronCurvEq2}
x(s,v,w)=s e_1(v,w) +c  N(v,w),
\end{equation}
where,  by abuse of notation,  we denote the Euclidean coordinates of the vector field $e_1$ by $e_1(v,w)$. 

Now, we want to consider  three cases separately.
 
\textit {Case I.} $\nabla_{e_2} e_1\neq0,\ \nabla_{e_3} e_1\neq0$. In this case, we put $y(v,w)=e_1(v,w)$. Then, because of assumptions, $y$ defines a regular surface in $\mathbb S^3(1)\subset\mathbb E^4$. Moreover, since $N$ is the unit normal vector field of $M$, we have $\langle\partial_s, N\rangle=\langle\partial_v, N\rangle=\langle\partial_w, N\rangle=0$. Thus, from \eqref{GaussKronCurvEq2} we get 
$$\langle y, N\rangle=\langle y_v, N\rangle=\langle y_w, N\rangle=0.$$
Therefore, $n(v,w)=N(v,w)$ is the spherical unit normal vector field of the regular surface $y$. Thus, \eqref{GaussKronCurvEq2} turns into \eqref{ProgTangHypsurfPosVect} given in the Example \ref{ProgTangHypsurf}. Hence, we have the case (ii) of the theorem.

\textit {Case II.} $\nabla_{e_2} e_1=\nabla_{e_3} e_1=0$. In this case, \eqref{GCRCond4} for $i=2,3$ and \eqref{GaussKronCurvEq1b} imply $c\neq0$ and $k_2=k_3=-1/c$. Therefore, principal curvatures of $M$ are obtained as $0,-1/c,-1/c$. Hence, $M$ is a spherical hypercylinder. Thus, we have the case (i) of the theorem.

\textit {Case III.} $\nabla_{e_2} e_1=0,\ \nabla_{e_3} e_1\neq0$. In this case, we have $\omega_{12}(e_2)=0$ which yields $\omega_{12}=0$ in view of \eqref{E4GCRCond0}. Thus, \eqref{GCRCond4} for $i=2$ and \eqref{GaussKronCurvEq1b} imply $c\neq0$ and  $k_2=-1/c$. Thus, we have $e_3(k_2)=0$ and, obviously, $k_2\neq k_3$. Therefore, by using \eqref{E4GCRCond1f}, we obtain $\omega_{23}(e_2)=0$. Furthermore, since  $k_2\neq k_3$, \eqref{E4GCRCond1d} implies $\omega_{23}(e_1)=0$. Since $\omega_{12}=0$ and $\omega_{23}=\omega_{23}(e_3)\theta_{3}$, by using \eqref{CartansFirstStructural} for $i=2$, we obtain $d\theta_2=0$. Therefore, by applying the Poincar\`e lemma,  we see that we may assume $e_2=\partial_v$. Moreover, since $(s,v,w)$ is orthogonal, we also have $e_3$ is proportional to $\partial_w$.

On the other hand, since $\widetilde\nabla_{e_2} e_1=0$, \eqref{GaussKronCurvEq2} becomes
\begin{equation}\label{GaussKronCurvCaseIIIEq1}
x(s,v,w)=s e_1(w) +c  N(v,w).
\end{equation}
In addition, since  $\omega_{23}(e_1)=\omega_{23}(e_2)=0$ and $k_2=-1/c$, we have 
\begin{equation}\label{GaussKronCurvCaseIIIEq2}
\widetilde\nabla_{\partial_v}\partial_v=\widetilde\nabla_{e_2}e_2=-\frac{1}cN.
\end{equation}
From \eqref{GaussKronCurvCaseIIIEq1} and \eqref{GaussKronCurvCaseIIIEq2} we get 
\begin{equation}\nonumber
N_{vv}+\frac{1}{c^2}N=0
\end{equation}
which implies 
\begin{equation}\label{GaussKronCurvCaseIIIEq3}
N=\cos \frac vc A(w)+\sin \frac vc B(w)
\end{equation}
for some smooth vector valued functions $A,B$. Now, we put $\alpha(w)=e_1(w)$ in \eqref{GaussKronCurvCaseIIIEq1} and use \eqref{GaussKronCurvCaseIIIEq3} to get \eqref{ProgTangHypsurfType2PosVect}. Note that if $\alpha(w)$ is constant $M$ becomes a hypercylinder which gives the case (i) of the theorem. 

Next, we assume that $\alpha'(w)$ does not vanish. Therefore, $\alpha$ is a regular curve satisfying $\langle \alpha,\alpha \rangle=\langle e_1,e_1\rangle=1$. Thus, by re-defining $w$ properly, we may assume $\alpha$ is a unit speed curve lying on $\mathbb S^3(1)$ and we can consider $A$, $B$ as vector fields on $\alpha$. Moreover, since $(s,v,w)$ is orthogonal, we have $\langle x_s,x_v\rangle=\langle x_v,x_w\rangle=\langle x_s,x_w\rangle=0$. By combining these equations with \eqref{ProgTangHypsurfType2PosVect}, we get  \eqref{ProgTangHypsurfType2Eq1}. Hence, we obtain the case (iii) of the theorem. 

Converse is obivous. Hence, the proof is completed.
\end{proof}

\end{document}